\documentclass[a4paper,english,nostix]{birthday}
\usepackage[latin1]{inputenc}
\usepackage{amsmath,babel}
\usepackage{undertilde}
\usepackage{amsthm}
\usepackage{amstext}

\usepackage{slashed}

\usepackage{amsmath,amssymb,amsfonts,epsfig,mathrsfs}

\newtheorem{definition}{Definition}[section]
\newtheorem{theorem}[definition]{Theorem}
\usepackage{amscd,psfrag}
\usepackage{yhmath}
\usepackage[mathscr]{eucal}

\newtheorem{lemma}[definition]{Lemma}
\newtheorem{corollary}[definition]{Corollary}

\newtheorem{proposition}[definition]{Proposition}

\newcommand{\real}{\mathbb{R}}
\newcommand{\R}{\mathbb{R}}
\newcommand{\vf}{\Gamma(TM)}
\newcommand{\na}{\nabla}

\newcommand{\rnk}{\mathbb{R}^{n+k}}

\newcommand{\qtens}{\bigwedge^q T^*M}

\newcommand{\nap}{\nabla^\perp}
\newcommand{\bep}{B^\epsilon}
\newcommand{\nep}{\nabla^{\perp,\epsilon}}

\newcommand{\vb}{V^{(B)}}
\newcommand{\vnab}{V^{(\nabla^\perp)}}
\newcommand{\ob}{\Omega^{(B)}}
\newcommand{\onab}{\Omega^{(\nabla^\perp)}}
\newcommand{\vbe}{V^{(B^\epsilon)}}
\newcommand{\vnabe}{V^{(\nabla^{\perp,\epsilon})}}
\newcommand{\obe}{\Omega^{(B^\epsilon)}}
\newcommand{\onabe}{\Omega^{(\nabla^{\perp,\epsilon})}}

\newcommand{\woneploc}{W^{1,p}_{\rm loc}}

\newcommand{\lploc}{L^p_{\rm loc}}
\newcommand{\hil}{\mathcal{H}}
\newcommand{\sdag}{S^\dagger}
\newcommand{\tdag}{T^\dagger}

\newcommand{\st}{S\oplus T}
\newcommand{\stdag}{\sdag\vee\tdag}
\newcommand{\useq}{\{u^\epsilon\}}
\newcommand{\vseq}{\{v^\epsilon\}}
\newcommand{\ub}{\bar{u}}
\newcommand{\vbb}{\bar{v}}
\newcommand{\ran}{\text{ran}}

\newcommand{\antil}{\tilde{a}^\epsilon}

\newcommand{\bntil}{\tilde{b}^\epsilon}

\newcommand{\yz}{Y\bigoplus Z}
\newcommand{\yzstar}{Y^*\bigoplus Z^*}
\newcommand{\sd}{\slashed{\Delta}}

\newcommand{\htt}{\utilde{\hil}}

\newcommand{\e}{\epsilon}

\newcommand{\loc}{\text{loc}}
\newcommand{\p}{\partial}

\newcommand{\weak}{\rightharpoonup}
\newcommand{\di}{{\rm div}}
\newcommand{\curl}{{\rm curl}}
\newcommand{\emb}{\hookrightarrow}

\numberwithin{equation}{section}

\title{Compensated Compactness in Banach Spaces and \\Weak Rigidity of Isometric Immersions of Manifolds}
\author{Gui-Qiang G. Chen and Siran Li}
\contact[chengq@maths.ox.ac.uk]{Mathematical Institute, University of Oxford, Oxford, OX2 6GG, UK}
\contact[Siran.Li@rice.edu]{Mathematical Institute, University of Oxford, Oxford, OX2 6GG, UK}

\begin{document}
\begin{center}
 {\it To Helge Holden on the occasion of his $60$th birthday\\ with friendship and affection}
\end{center}

\begin{abstract}
We present a compensated compactness theorem in Banach spaces established recently,
whose formulation is originally motivated by the weak rigidity problem
for isometric immersions of manifolds with lower regularity.
As a corollary, a geometrically intrinsic div-curl lemma
for tensor fields on Riemannian manifolds is obtained.
Then we show how this intrinsic div-curl lemma can be employed
to establish the global weak rigidity of the Gauss-Codazzi-Ricci equations,
the Cartan formalism, and the corresponding isometric immersions of Riemannian submanifolds.
\end{abstract}
\begin{classification}
{Primary:
53C24, 53C42, 53C21, 53C45, 57R42, 35M30, 35B35, 58A15, 58J10;
Secondary: 57R40, 58A14, 58A17, 58A05, 58K30, 58Z05}.
\end{classification}

\begin{keywords}
Compensated compactness, weak rigidity, global, intrinsic,
Gauss-Codazzi-Ricci equations,
Riemannian manifolds, isometric immersions,
isometric embeddings, lower regularity, weak convergence,
approximate solutions, div-curl lemma,
Riemann curvature.
\end{keywords}

\section{Introduction}

In this paper we discuss a unified approach developed recently in \cite{chenli}
towards establishing more general and intrinsic compensated compactness theorems
for nonlinear analysis and nonlinear partial differential equations (PDEs),
with applications to the weak rigidity of isometric immersions
of Riemannian manifolds into Euclidean spaces with lower regularity.

Compensated compactness has played an important role
in the study of nonlinear PDEs arising from fluid mechanics,
calculus of variations,
and nonlinear elasticity;
{\it cf.} \cite{Ball, CLMS, Dacorogna, Dafermos-book, Evans-Muller, Mur78, Tar79, Tartar2}
and the references cited therein.
The {\em div-curl lemma} introduced by Murat and Tartar \cite{Mur78,Tar79}
is the cornerstone of the theory, which reads in the simplest and original form as follows:

\begin{lemma}[The div-curl lemma by Murat-Tartar \cite{Mur78,Tar79}]\label{proposition: murat-tartar}
Let $\{v^\e\}, \{w^\e\} \subset L^2_{\loc}(\R^3; \R^3)$ be two sequences of vector fields on $\R^3$
such that
$$
v^\e \weak \bar{v}, \quad w^\e \weak \bar{w} \qquad\,\, \mbox{weakly in $L^2_\loc$}.
$$
Assume that $\{\di\, v^\e\}$ is pre-compact in $H^{-1}_{\loc}(\R^3; \R)$
and $\{\curl\, w^\e\}$ is pre-compact in $H^{-1}_\loc(\R^3; \R^3)$.
Then
$$
v^\e \cdot w^\e \, \to \, \bar{v}\cdot \bar{w} \qquad\quad
\mbox{in the sense of distributions.}
$$
\end{lemma}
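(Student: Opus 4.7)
The plan is to localize the statement by multiplying against a smooth cutoff and to exploit a Helmholtz decomposition of $v^\e$. The underlying mechanism is that pre-compactness of $\di v^\e$ in $H^{-1}_\loc$ gains one derivative on the curl-free part, upgrading weak $L^2_\loc$ convergence to strong $L^2_\loc$ convergence there, after which a standard strong-times-weak argument handles that piece; what remains is a divergence-free field, which can be paired against $\curl w^\e$ via integration by parts to bring in the second hypothesis.

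Fix $\varphi\in C_c^\infty(\R^3)$; it suffices to show $\int\varphi\, v^\e\cdot w^\e\,\dd x\to\int\varphi\,\bar v\cdot\bar w\,\dd x$. On a ball $B\supset\text{supp}\,\varphi$ I would solve $\Delta p^\e=\di v^\e$ with zero Dirichlet data and set $\tilde v^\e:=v^\e-\nabla p^\e$. Standard elliptic theory gives $\{\nabla p^\e\}$ bounded in $L^2(B)$, and the pre-compactness of $\{\di v^\e\}$ in $H^{-1}_\loc$ lifts this to pre-compactness of $\{\nabla p^\e\}$ in $L^2(B)$, so $\nabla p^\e\to\nabla\bar p$ strongly in $L^2(B)$. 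Consequently $\int\varphi\,\nabla p^\e\cdot w^\e\,\dd x\to\int\varphi\,\nabla\bar p\cdot\bar w\,\dd x$ by the strong-times-weak product rule. For the divergence-free remainder $\tilde v^\e$, on the simply-connected ball $B$ I introduce a vector potential $A^\e$ in Coulomb gauge $\di A^\e=0$ satisfying $\tilde v^\e=\curl A^\e$ and $\Delta A^\e=-\curl v^\e$; this yields $\{A^\e\}$ bounded in $H^1(B)$ and, by Rellich--Kondrachov, pre-compact in $L^2(B)$.

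Integration by parts, based on $\curl(\varphi A^\e)=\varphi\curl A^\e+\nabla\varphi\times A^\e$, gives
\[
\int\varphi\,\tilde v^\e\cdot w^\e\,\dd x=\int\varphi A^\e\cdot\curl w^\e\,\dd x-\int(\nabla\varphi\times A^\e)\cdot w^\e\,\dd x.
\]
The first integral is a duality pairing between $\{\varphi A^\e\}\subset H^1_0(B)$, converging strongly in $L^2$ (in fact weakly in $H^1_0$), and $\{\curl w^\e\}$, pre-compact in $H^{-1}_\loc$; such a pairing passes to the limit. The second integral is strong-$L^2_\loc$ times weak-$L^2_\loc$ and also passes to the limit. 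Combining the three contributions and identifying the limit recovers the claim. The main obstacle I expect is technical bookkeeping: coordinating the Hodge decomposition, the Coulomb gauge, and the boundary data for the auxiliary elliptic problems on the localized domain so that the regularity gain from $H^{-1}_\loc$ to $L^2_\loc$ is genuine and boundary terms do not contaminate the limits. The structural reason the argument succeeds is the $L^2$-orthogonality between gradients and curls, which decouples the two components of $v^\e$ against $w^\e$ in precisely the way the two hypotheses demand.
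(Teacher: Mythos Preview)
Your argument is correct. It is the Helmholtz/Hodge-decomposition approach in the spirit of Robbin--Rogers--Temple: split $v^\e$ into a gradient part, on which the $H^{-1}$ compactness of $\di v^\e$ upgrades weak to strong $L^2$ convergence via the Dirichlet solution operator, and a solenoidal part, for which a Coulomb-gauge potential lets you throw the curl onto $w^\e$ by integration by parts against the compactly supported test function. The bookkeeping you flag (boundary data, gauge choice) is routine once everything is localized inside the ball and multiplied by $\varphi$, so there is no real obstacle.

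The paper takes a different route. Rather than proving Lemma~\ref{proposition: murat-tartar} by a concrete decomposition on $\R^3$, it establishes an abstract compensated compactness theorem (Theorem~\ref{thm_abstract compensated compactness}) for bounded operators $S,T$ on a Hilbert space satisfying an algebraic orthogonality condition $S\circ T^\dagger=0=T\circ S^\dagger$ and an analytic coercivity estimate, and then recovers the classical lemma by specializing to $S=\di$, $T=\curl$, $\hil=L^2$, $Y=Z=\htt=H^{-1}$. The mechanism there is a ``generalized Laplacian'' $\sd=(S\oplus T)(S^\dagger\vee T^\dagger)$ whose Fredholmness replaces your elliptic solvability step, and the decomposition $\hil=\ker(S\oplus T)\oplus\ran(S^\dagger\vee T^\dagger)$ plays the role of your Helmholtz splitting. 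Your proof is shorter and more hands-on for this particular statement; the paper's argument is geometry-free and immediately yields the intrinsic div--curl lemma on Riemannian manifolds (Theorem~\ref{thm: geometric div-curl}) and, more generally, any situation where two first-order constraints satisfy the abstract conditions (Op~1)--(Op~2), without having to redo the potential-theoretic constructions in each new setting.
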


Two distinctive approaches have been developed in the literature
to prove Lemma \ref{proposition: murat-tartar}: One is via harmonic analysis,
and the other is based on the Hodge (de Rham) decomposition theorem;
see \cite{CLMS, Eva90, KY13, Murat2, RRT87} and the references cited therein.
Both approaches depend crucially on the geometry of the Euclidean spaces
or Riemannian manifolds; see \S 2 for a detailed exposition.
See also Whitney \cite{Whitney57} (Chapter IX, Theorem 17A)
for an early version of the div-curl type lemma in the language of his
{\it geometric integration theory}.

One of our crucial observations in \cite{chenli} is that the div-curl lemma can be reformulated
via a functional-analytic approach in generic Banach spaces with two general bounded linear operators
in place of {\em div} and {\em curl}.
Indeed, all the necessary properties we need for {\em div} and {\em curl} in order to conclude the lemma
can be extracted as two simple, abstract conditions: One is algebraic, and the other is analytic.
Both conditions can be naturally formulated in terms of operator algebras on Banach spaces.
This leads to the
generalization of the existing versions of the div-curl lemma,
and provides the third approach to the theory of compensated compactness.
In addition, combining the functional-analytic compensated compactness theorem together
with the ellipticity of the Laplace-Beltrami operator, we are ready to obtain
a geometrically intrinsic div-curl lemma on Riemannian manifolds.
Throughout this paper, the term ``{\it intrinsic}''
means ``{\it independent of local coordinates}'' on Riemannian manifolds.

As an application of the
new div-curl lemma, we analyze the weak rigidity of isometric immersions
of Riemannian manifolds into Euclidean spaces.
The problem of isometric immersions/embeddings has been of considerable interest
in the development of differential geometry, which has also led to the important developments
of new ideas and methods in nonlinear analysis and PDEs ({\it cf.} \cite{HanHon06, Nas54, Nas56, yau}).
Moreover, it is well-known in differential geometry  that
the Gauss-Codazzi-Ricci (GCR) equations are the compatibility conditions for the existence
of isometric immersions (see \cite{DoC92, Spi79}).
The GCR equations can be viewed as a first-order nonlinear system of geometric PDEs for the
second fundamental forms and normal connections. However, in general, the GCR equations
are of no type, neither purely hyperbolic nor purely elliptic.

The weak rigidity problem for isometric immersions can be formulated as follows:
Given a sequence of isometric immersions of an $n$-dimensional manifold with
a $W^{1,p}_{loc}$  metric for $p>n$,
whose  second fundamental forms and normal connections are uniformly bounded
in $L^p_{\rm loc}$,
whether its weak limit is still an isometric immersion with
the same $W^{1,p}_{\rm loc}$ metric.
This rigidity problem has its motivation from both geometric analysis
and nonlinear elasticity:
The existence of isometric immersions of Riemannian manifolds with lower regularity
corresponds naturally to the realization of elastic bodies with lower regularity
in the physical space.
See Ciarlet-Gratie-Mardare \cite{Cia08}, Mardare \cite{Mar05}, Szopos \cite{Szo08},
and the references cited therein.

In \cite{chenli}, we have proved that the solvability of the GCR equations in $W^{1,p}$
is equivalent to the existence of $W^{2,p}$ isometric immersions on Riemannian manifolds.
This is done by employing the Cartan formalism, also known as the method of moving frames.
We have shown that both the GCR equations and isometric immersions are equivalent
to the structural equations of the Cartan formalism.
Then, by exploiting the {\em div-curl structure} of the GCR equations
and the Cartan formalism,
we have deduced the global weak rigidity of these geometric PDEs,
independent of local coordinates
on Riemannian manifolds.
Now, in view of the equivalence theorem established above,
the weak rigidity of isometric immersions is readily concluded.

The rest of this paper is organized as follows:
In \S $2$, we first formulate
the functional-analytic compensated compactness theorem in Banach spaces
and give an outline of its proof,
and then deduce a geometrically intrinsic div-curl lemma on Romannian manifolds
as its corollary.
Two generalizations of the latter result are also discussed.
In \S 3, we collect some background on differential geometry pertaining
to the GCR equations
and the Cartan formalism.
Finally, in \S 4, we show the weak rigidity of isometric immersions,
together with the weak rigidity of the GCR equations and the Cartan formalism.

\section[\small A Compensated Compactness Theorem in Banach Spaces]{A Compensated Compactness Theorem in Banach Spaces}

In this section we first discuss a functional-analytic compensated compactness theorem. As its consequence, we deduce
a geometrically intrinsic div-curl lemma on Riemannian manifolds.

To establish the original div-curl lemma, Lemma \ref{proposition: murat-tartar}, as well as its various
generalizations (see \cite{CLMS, Eva90, KY13, Murat2, RRT87} and the references cited therein),
the following distinctive approaches have been adopted:
	
The first approach, developed by Murat and Tartar in \cite{Mur78, Tar79}, is based on {\em harmonic analysis}.
It is observed that the first-order differential constraints, namely the pre-compactness of $\{\di\,v^\e\}$
and $\{\curl\, w^\e\}$ in $H^{-1}$, lead to the decay properties of $\{v^\e\cdot w^\e\}$
in the high Fourier frequency region.
Coifman-Lions-Meyer-Semmes in \cite {CLMS}
extended this lemma by combining the exploitation of this observation with
further techniques in harmonic analysis, including Hardy spaces,
and commutator estimates of BMO functions and Riesz transforms.

The second approach is based on the {\em Hodge decomposition}.
Robbin-Rogers-Temple in \cite{RRT87}
observed that, by writing
\begin{equation}\label{eq: hodge in R3}
v^\e = \Delta \Delta^{-1} v^\e = ({\rm grad} \circ \di - \curl \circ \curl)\Delta^{-1} v^\e,
\end{equation}
$\{v^\e\}$ can be decomposed into a weakly convergent part and a strongly convergent part, and similarly for $w^\e$
(also see the exposition in Evans \cite{Eva90}).
For this,
the advantage can be taken of
the first-order differential constraints,
the commutativity of the Green operator $\Delta^{-1}$ on $\R^3$ with divergence,
gradient, and curl, and most crucially, the ellipticity of $\Delta$,
so that, for $\{v^\e\cdot w^\e\}$, the pairing of the weakly convergent terms
pass to the limits via integration by parts,
and the pairings of other terms can be dealt with directly.
Observe that the Laplace-Beltrami operator $\Delta$ defined for differential forms on any oriented closed
Riemannian manifold $(M,g)$ is always elliptic, and it has a  decomposition similar to \eqref{eq: hodge in R3}:
\begin{equation}\label{eq: hodge for any mfd}
\Delta = d\circ \delta + \delta \circ d,
\end{equation}
where $d$ is the exterior differential and $\delta$ is
its $L^2$-adjoint ({\it cf.} \S 6 in \cite{War71} for the details),
so that the div-curl lemma is ready to be generalized to Riemannian manifolds.

\smallskip	
The third approach, which is the main content of this section,
is {\em functional-analytic}.
As aforementioned, the existing div-curl lemmas are formulated in terms of vector
fields or local differential forms on Euclidean spaces ({\it cf.} \cite{CLMS, Eva90, RRT87, Mur78, Tar79}),
and some generalizations to Riemannian manifolds are available ({\it cf.} \cite{chenli, I, KY13}).
For example, Kozono-Yanagisawa \cite{KY13} obtained a
div-curl lemma using functional-analytic results on $L^2(\R^n)$,
as well as a geometric version,
with emphasis on the weak convergence of vector fields up to the boundary of
the domain or compact Riemannian manifold, which requires the divergence
and curl of the vector fields to be bounded in $L^2$.
One of our key observations is that,
for the ``usual'' div-curl lemmas -- with the exception of certain end-point cases,
{\it e.g.}, Theorem \ref{thm_ generalised critical case, div curl lemma},
the specific {\em geometry}
of Euclidean spaces or manifolds plays no essential role.
Based on this observation, we have formulated and established a general compensated compactness theorem
through bounded linear operators on Banach spaces in \cite{chenli}. Roughly speaking, it may be stated as follows:
If two bounded linear operators $S$ and $T$ between Banach spaces satisfy two conditions:
One is algebraic ($S$ and $T$ are {\it orthogonal} to each other), and the other is analytic
($\st$ determines  {\it nearly everything}),
then a result in the spirit of Lemma \ref{proposition: murat-tartar} holds,
with {\it div} and {\it curl} replaced by $S$ and $T$, respectively.

We now discuss the functional-analytic compensated compactness theorem
in Banach spaces, as well as its geometric implications.
For some background on functional analysis, we refer to \cite{f}.
	
Let us first explain some notations:
In the sequel, $\hil$ is a Hilbert space over the field $\mathbb{K}=\real \text{ or } \mathbb{C}$
so that $\hil=\hil^\ast$, and $Y,Z$ are two Banach spaces over $\mathbb{K}$.
We use $\hil^\ast$ and $Y^*, Z^*, \ldots$, to denote the dual Hilbert and Banach spaces, respectively.
In what follows, we consider the bounded linear operators:
$$
S:\hil\to Y, \quad T:\hil\to Z.
$$
For their adjoint operators, we write
$$
\sdag:Y^*\to\hil \quad \tdag: Z^* \to \hil.
$$
By $\langle\cdot,\cdot\rangle_Y, \langle\cdot,\cdot\rangle_Z, \ldots$,
we mean the duality pairings on suitable Banach spaces,
and notation $\langle\cdot,\cdot\rangle$ without subscripts
is reserved for the inner product on $\hil$.
Furthermore, for any normed vector spaces $X$, $X_1$, and $X_2$,
we write $\{s^\epsilon\}\subset X$ for a sequence $\{s^\epsilon\}$ in $X$
as a subset, and $X_1 \Subset X_2$ for a compact embedding between
the normed vector spaces.
We use $\|\cdot\|_X$ to denote the norm in $X$,
write $\rightarrow$ for the strong convergence of sequences under the norm,
and write $\rightharpoonup$ for the weak convergence.
$\bar{B}_X:=\{x\in X: \|x\|\leq 1\}$ is the closed unit ball in $X$,
and $B_X:=\{x\in X: \|x\|<1\}$ is the open unit ball.
Moreover, for a linear operator $L: X_1\mapsto X_2$,
its kernel is written as $\ker(L)\subset X_1$,
and its range is $\ran(L)\subset X_2$.
Finally, for $X_1\subset X$ as a vector subspace,
its {\em annihilator} is defined
as $X_1^\perp:=\{f\in X^*:f(x)=0 \text{ for all } x\in X_1\}$.

To proceed, we
define the following linear operators:
\begin{equation*}
\begin{cases}
\st : \hil \mapsto Y\bigoplus Z, \qquad\,\, &(\st) h:=(Sh,Th);\\[1mm]
\stdag: (Y\bigoplus Z)^*\cong Y^*\bigoplus Z^* \mapsto \hil, \qquad &(\stdag)(a,b) := \sdag a + \tdag b
\end{cases}
\end{equation*}
for $h\in\hil$, $a\in Y^*$, and $b\in Z^*$.
The direct sum $Y\bigoplus Z$ is always endowed with the
norm: $\|(y,z)\|_{Y\bigoplus Z}:=\|y\|_Y+\|z\|_Z$.
Also, it is direct to see that $(\st)^\dagger = \stdag$.

Our compensated compactness theorem is formulated in the following:

\begin{theorem}[Theorem \ref{thm_abstract compensated compactness} in \cite{chenli}]\label{thm_abstract compensated compactness}
Let $\hil=\hil^*$ be a Hilbert space over $\mathbb{K}$,
$Y$ and $Z$ be reflexive Banach spaces over $\mathbb{K}$,
and $S:\hil\mapsto Y$ and $T:\hil\mapsto Z$ be bounded linear operators satisfying
\begin{enumerate}
\item[\rm (Op 1)]
Orthogonality{\rm :}
\begin{equation}\label{eqn_almost ortho two}
	S\circ \tdag = 0, \qquad T\circ \sdag =0{\rm ;}
\end{equation}
\item[\rm (Op 2)]
For some Hilbert space $(\htt; \|\cdot\|_{\htt})$ such that $\hil$ embeds compactly into $\htt$,
there exists a constant $C>0$ so that, for all $h\in\hil$,
\begin{equation}\label{estimate new}
\|h\|_{\hil} \leq C\big(\|(Sh,Th)\|_{Y\bigoplus Z}+\|h\|_{{\htt}}\big)
= C\big(\|Sh\|_Y+\|Th\|_Z + \|h\|_{{\htt}}\big).
\end{equation}
\end{enumerate}

Assume that two sequences $\useq, \vseq \subset \hil$ satisfy the following conditions{\rm :}
\begin{enumerate}
\item[\rm (Seq 1)]
$u^\e \rightharpoonup \ub$ and  $v^\e\rightharpoonup \vbb$ in $\hil$ as $\e \to 0${\rm ;}
\item[\rm (Seq 2)]
$\{Su^\e\}$ is pre-compact in $Y$, and $\{Tv^\e\}$ is pre-compact in $Z$.
\end{enumerate}
Then
$$
\langle u^\e, v^\e\rangle \rightarrow \langle\ub, \vbb\rangle \qquad \text{ as } \e\to 0.
$$
\end{theorem}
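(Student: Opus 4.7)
My plan is to reduce to the case $\ub=\vbb=0$ with $Su^\e\to 0$ strongly in $Y$ and $Tv^\e\to 0$ strongly in $Z$, construct a Hodge-type decomposition of $\hil$ from (Op 2), and then exploit (Op 1) to kill all dangerous cross-pairings in $\langle u^\e,v^\e\rangle$. For the reduction, the boundedness of $S$ gives $Su^\e\weak S\ub$ in $Y$, and together with the pre-compactness in (Seq 2) this upgrades to strong convergence $Su^\e\to S\ub$, whence $S(u^\e-\ub)\to 0$. Symmetrically, $T(v^\e-\vbb)\to 0$. Since $u^\e-\ub\weak 0$ and $v^\e-\vbb\weak 0$ in $\hil$, it suffices to prove the conclusion under the strengthened hypotheses just listed.

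The main analytic step, and the one I expect to be the \emph{hardest}, is the Hodge splitting extracted from (Op 2). A Peter-Paul argument based on \eqref{estimate new} combined with $\hil\Subset\htt$ shows that $N:=\ker(\st)=\ker S\cap\ker T$ is finite-dimensional and that $\ran(\st)$ is closed in $\yz$. Invoking reflexivity of $Y$ and $Z$ together with the Banach closed-range theorem, $\ran(\stdag)$ is closed in $\hil$ and coincides with $N^\perp$ (identifying the annihilator with the orthogonal complement through the self-duality of $\hil$). Let $P$ denote the orthogonal projection of $\hil$ onto $N$; the open mapping theorem applied to the surjection $\stdag:\yzstar\to N^\perp$ then yields decompositions
\[
u^\e=p^\e+\sdag\alpha^\e+\tdag\beta^\e,\qquad v^\e=q^\e+\sdag\gamma^\e+\tdag\delta^\e,
\]
with $p^\e:=Pu^\e$, $q^\e:=Pv^\e$, and the sequences $\{(\alpha^\e,\beta^\e)\},\{(\gamma^\e,\delta^\e)\}$ uniformly bounded in $\yzstar$.

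It remains to expand $\langle u^\e,v^\e\rangle$ into nine terms and estimate each. The four cross terms containing exactly one copy of $p^\e$ or $q^\e$ vanish identically, since $p^\e,q^\e\in\ker S\cap\ker T$ give, e.g., $\langle p^\e,\sdag\gamma^\e\rangle=\langle Sp^\e,\gamma^\e\rangle_Y=0$. Among the four pure pairings, (Op 1) annihilates the two mixed ones: $\langle\sdag\alpha^\e,\tdag\delta^\e\rangle=\langle\alpha^\e,S\tdag\delta^\e\rangle_Y=0$, and symmetrically $\langle\tdag\beta^\e,\sdag\gamma^\e\rangle=0$. Thus
\[
\langle u^\e,v^\e\rangle=\langle p^\e,q^\e\rangle+\langle\sdag\alpha^\e,\sdag\gamma^\e\rangle+\langle\tdag\beta^\e,\tdag\delta^\e\rangle.
\]
The first term tends to zero because $N$ is finite-dimensional and $p^\e=Pu^\e\weak 0$ forces strong convergence. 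For the second, the identities $Sp^\e=0$ and $S\tdag\beta^\e=0$ imply $S\sdag\alpha^\e=Su^\e$, so
\[
\langle\sdag\alpha^\e,\sdag\gamma^\e\rangle=\langle\gamma^\e,Su^\e\rangle_Y\longrightarrow 0
\]
by uniform boundedness of $\{\gamma^\e\}$ in $Y^*$ and strong convergence $Su^\e\to 0$ in $Y$; the third term is handled symmetrically. The only delicate ingredient is thus the Hodge decomposition in the second paragraph, where reflexivity of $Y,Z$ is genuinely used; everything downstream is algebraic bookkeeping powered by (Op 1) and (Seq 2).
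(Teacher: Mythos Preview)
Your proposal is correct and follows essentially the same route as the paper: establish that $\st$ has finite-dimensional kernel and closed range from (Op~2), decompose $\hil=\ker(\st)\oplus\ran(\stdag)$, and use (Op~1) to reduce $\langle u^\e,v^\e\rangle$ to the diagonal pairings $\langle\sdag\alpha^\e,\sdag\gamma^\e\rangle=\langle\gamma^\e,Su^\e\rangle_Y$ and its $T$-analogue, which are then handled by pre-compactness plus boundedness of the representatives. Your preliminary reduction to $\ub=\vbb=0$ is a mild streamlining not present in the paper (which instead carries the limits through and packages the same computation via a ``generalized Laplacian'' $\sd:=(\st)\circ(\stdag)$), but the substance is identical.
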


\begin{proof}[Outline of Proof]
We now sketch the main steps of the proof here.
The interested readers are referred to \cite{chenli} for the details.

\smallskip
{\bf Step 1.} Claim:  {\it $\st: \hil\mapsto\yz$ has finite-dimensional kernel and closed range}.
As $Y$ and $Z$ are reflexive, $\ran(\st)$ is also a reflexive Banach space.
This observation guarantees that all the assumptions (Op 1)--(Op 2) and (Seq 1)--(Seq 2)
remain valid, provided that $\yz$ is replaced by $\ran(\st)$, {\it i.e.}, $S$ and $T$ are surjective.
Thus, once Step 1 has been established,  we can assume that $\st$ is {\em Fredholm}
in the subsequent arguments.
	
Indeed, to show $\dim_{\mathbb{K}}\ker(\st) < \infty$, by the classical Riesz lemma,
it suffices to check that the closed unit ball of $\ker(\st)$ is compact
in the norm topology of $\yz$.
To this end, let $j: \hil \emb \htt$ be the compact embedding in (Op 2).
Then, for any $h \in \hil$ such that $ j(h)\in j[\ker(\st)]\cap \bar{B}_{\htt}$,
the same condition yields
	\begin{equation}
	\|h\|_{\hil} \leq C(\|Sh\|_Y + \|Th\|_Z + \|j(h)\|_{\htt}) \leq C.
	\end{equation}
Therefore, the unit ball of $j[\ker(\st)]$ in $\htt$ is finite-dimensional,
and the same conclusion holds for $\ker(\st)$ as $j$ is an embedding.

To show $\ran(\st) \subset \yz$ as a closed subspace, we take any sequence $\{h^\mu\}\subset\hil$
such that $(\st) h^\mu \rightarrow w$ in the norm topology of $\yz$ and
argue that $w \in \ran(\st)$.
This follows from the following {\em coercivity estimate}: There exists a universal constant $\e_0 > 0$ such that
\begin{equation}\label{eq: coercive}
\|(\st)h\|_{\yz} \geq \e_0 \|j(h)\|_{\htt} \qquad \text{ for all } h \in \hil.
\end{equation}
	
The estimate in \eqref{eq: coercive} is obtained via a contradiction argument,
by taking into account of the finite-dimensionality of $\ker(\st)$ and the $1$-homogeneity
of \eqref{eq: coercive}.
Then we decompose $h^\mu = k^\mu + r^\mu$ for  $k^\mu \in \ker(\st)$ and $r^\mu \in [\ker(\st)]^\perp$.
In view of the inequality:
$$
\|(\st)(h^{\mu_1}-h^{\mu_2})\|_{\yz} \geq \e_0 \|j (r^{\mu_1}-r^{\mu_2})\|_{\htt},
$$
we find that $\{j(r^\mu)\}$ is a Cauchy sequence in $\htt$, which converges to some $j(r)$.
Then it is direct to check that $(\st)r = w$, which leads to the claim in Step $1$.

\smallskip
Notice in passing that we have obtained the following decomposition of $\hil$ along $\st$:
\begin{equation}\label{eq: decomposition of H}
\hil = \ker(\st) \bigoplus \ran (\stdag),
\end{equation}
where $\bigoplus$ is the topological direct sum of the Banach spaces,
with the summands being orthogonal with respect to the inner product on $\hil$.
Moreover, note that only the {\it analytic} assumption (Op 2) on $S$ and $T$
has been used in Step 1.

\smallskip
{\bf Step 2.}
From now on, $\st$ is assumed to be surjective and with finite-dimensional kernel.
In this step, we decompose each of the two sequences $\{v^\e\}$ and $\{w^\e\}$
into three parts: an $S$-free part, a $T$-free part, and a remainder
in the finite-dimensional space $\ker(\st)$.
This is done via  the {\it generalized Laplacian}.

Indeed, we define operator $\sd: \yzstar \mapsto \yz$ as follows:
\begin{equation}
\sd := (\st) \circ (\stdag) = S\sdag \oplus T\tdag.
\end{equation}
Then, thanks to Eq. \eqref{eq: decomposition of H} and  $\ker(\stdag) = [\ran(\st)]^\perp$,
we find that $\sd$ also has  finite-dimensional kernel and closed range and,
as in Step 1, $\sd$ can be assumed to be surjective.
Denote by $\pi_1:\hil = \ker(\st) \bigoplus \ran (\stdag) \mapsto \ker(\st)$
the canonical projection onto the first coordinate,
which is a finite-rank (hence compact) operator.
Then our decomposition of $\{v^\e\}$ and $\{w^\e\}$ are given as follows:
\begin{equation}\label{decompositions of vector fields u,v}
\begin{cases}
u^\e = \pi_1(u^\e) + \sdag a^\e+\tdag b^\e, \qquad
&v^\e=\pi_1(v^\e) + \sdag \antil + \tdag \bntil,\\[1mm]
\ub = \pi_1(\ub) + \sdag a + \tdag b, \qquad
&\vbb = \pi_1(\vbb) + \sdag \tilde{a} +\tdag \tilde{b},
\end{cases}
\end{equation}
for some $a,\tilde{a}, a^\e,\antil \in Y^*$ and $b,\tilde{b}, b^\e, \bntil\in Z^*$.
Applying the {\em algebraic} condition (Op 1) of operators $S$ and $T$,
the inner products become:
\begin{equation}
\begin{cases}
\langle u^\e, v^\e\rangle = \langle \pi_1(u^\e), \pi_1(v^\e)\rangle + \langle \sdag a^\e, \sdag \antil\rangle
  + \langle \tdag b^\e, \tdag\bntil\rangle,\\[1mm]
\langle \ub, \vbb\rangle = \langle \pi_1(\ub), \pi_1(\vbb)\rangle
 + \langle \sdag a, \sdag \tilde{a}\rangle + \langle \tdag b, \tdag\tilde{b}\rangle.
\end{cases}
\end{equation}
Owing to the compactness of $\pi_1$, $\langle \pi_1(u^\e), \pi_1(v^\e)\rangle  \rightarrow \langle \pi_1(\ub), \pi_1(\vbb)\rangle$
as $\epsilon \rightarrow 0$. Therefore, to conclude the theorem, it remains to establish
\begin{equation}\label{key convergence}
\langle \sdag a^\e, \sdag \antil\rangle + \langle \tdag b^\e, \tdag\bntil\rangle
\rightarrow \langle \sdag a, \sdag \tilde{a}\rangle + \langle \tdag b, \tdag\tilde{b}\rangle \qquad \text{ as } \e \rightarrow 0,
\end{equation}
which is the content of the next step.

\smallskip
{\bf Step 3.}
To prove the convergence in \eqref{key convergence}, we start with the following two observations:
\begin{enumerate}
\item[\rm (i)] The left-hand side of \eqref{key convergence} can be expressed in terms of the generalized Laplacian:
\begin{align}\label{aa}
&\langle \sdag a^\e, \sdag \antil\rangle + \langle \tdag b^\e, \tdag\bntil\rangle \nonumber\\
&=\langle S\sdag a^\e,\antil\rangle_{Y} + \langle b^\e, T\tdag \bntil\rangle_{Z}=\big\langle\sd (a^\e,\bntil), (\antil,b^\e)\big\rangle_{\yz};
\end{align}
\item[\rm (ii)] Multiplying $S$ to $u^\e$ and $T$ to $v^\e$ in \eqref{decompositions of vector fields u,v} and invoking (Op 1),
we have
\begin{equation}\label{aaa}
Su^\e = S\sdag a^\e, \qquad Tv^\e = T\tdag \bntil,
\end{equation}
so that
\begin{equation}
\sd  (a^\e,\bntil) = (Su^\e, Tv^\e).
\end{equation}
\end{enumerate}

Now, as $\{Su^\e\}\subset Y$ and $\{Tv^\e\}\subset Z$ are pre-compact by assumption (Seq 2),
it suffices to show the boundedness of $\{(\antil, b^\e)\}$ in the norm topology of $\yzstar$
to reach the conclusion.
Furthermore, in view of the specific form of the expression involved in \eqref{key convergence},
it is enough to exhibit one particular representative $(\antil, b^\e)$ in
the co-set $\sd^{-1}\{(Sv^\e, Tu^\e)\}$
such that $\|(\antil, b^\e)\|_{\yzstar}\leq C$,
where $C>0$ is independent of $\e$.
As $\{(Sv^\e, Tu^\e)\}$ is uniformly bounded in the norm topology of $\yz$,
owing to the weak convergence of $\{v^\e\}$ and $\{w^\e\}$ assumed in (Seq 1),
the desired result follows from a standard result in functional analysis,
which is Claim $\clubsuit$ in the proof of Theorem 3.1 in \cite{chenli}.
This completes the proof.
\end{proof}

\smallskip
With the benefit of hindsight, let us now explain the motivation for Theorem \ref{thm_abstract compensated compactness}
and its relations with the earlier versions of the div-curl lemmas.
Consider a $3$-dimensional oriented closed manifold $M$ (differentiable, or of weaker Sobolev regularity, not necessarily Riemannian).
We denote by $\Omega^q(M)$ the space of differential $q$-forms on $M$,
by $\ast: \Omega^q(M)\mapsto \Omega^{\dim(M)-q}(M)$ the Hodge-star,
by $d: \Omega^q(M) \mapsto \Omega^{q+1}(M)$ the exterior differential,
and by $\sharp$ the tonic operator, {\it i.e.}, the canonical isomorphism between
the co-tangent bundle $T^*M$ and the tangent bundle $TM$ by raising indices
in the coefficients.
It is well-known that {\em div}, {\em grad}, and {\em curl}
can be defined intrinsically via the commutative diagram:
\begin{center}
$\begin{array}[c]{ccccccc}
\Omega^0(M)&\stackrel{d}{\longrightarrow}&\Omega^1(M)&\stackrel{d}{\longrightarrow}&\Omega^2(M)&\stackrel{d}{\longrightarrow}&\Omega^3(M)\\
\downarrow\scriptstyle{{\rm Id}}&&\downarrow\scriptstyle{\sharp}&&\downarrow\scriptstyle{\sharp\circ\ast}&&\downarrow\scriptstyle{\ast}\\
C^\infty(M)&\stackrel{{\rm grad}}{\longrightarrow}&\Gamma(TM)&\stackrel{\curl}{\longrightarrow}&\Gamma(TM)&\stackrel{\di}{\longrightarrow}&C^\infty(M)
\end{array}$
\end{center}
In particular, the Riemannian metric on $M$ plays no role at all.
The ``orthogonality'' of {\em div} and {\em curl} in the sense of (Op 1)
in Theorem \ref{thm_abstract compensated compactness},
which follows from the cohomological chain condition $d\circ d = 0$,
$\delta \circ \delta =0$, is a purely algebraic relation.
Therefore, it is not surprising that a compensated compactness theorem
with greater generality and abstractness is available.

Moreover, the Hodge decomposition approach to the div-curl lemma
initiated by Robbin-Rogers-Temple in \cite{RRT87} makes use of the Laplacian $\Delta$ on flat $\R^3$.
If we take $S=\di$, $T=\curl$, $\hil = L^2(\R^3;\R^3)$, $Y=H^{-1}(\R^3; \R)$, and $\htt=Z=H^{-1}(\R^3; \R^3)$
with suitable localizations if necessary,
the classical div-curl lemma (Lemma \ref{proposition: murat-tartar}) is immediately recovered.
Our generalized Laplacian $\sd$ extends the flat Laplacian and,
more generally, the Laplace-Beltrami operator $\Delta$ on manifolds,
in view of Eqs. \eqref{eq: hodge in R3}--\eqref{eq: hodge for any mfd}.
The Fredholmness of $\Delta$ follows from the Hodge decomposition theorem, {\it cf.} \S 6 in \cite{War71}.

Before our subsequent development, we remark that
the assumption of the reflexivity of $Y$ and $Z$ is crucial, since several counterexamples have been
constructed for non-reflexive $Y$ and $Z$ (see \cite{CDM} and Remark 3.2 in \cite{chenli}).
	
Now we discuss a geometric consequence of Theorem \ref{thm_abstract compensated compactness}.
Using the expression for $\sd=\Delta$ on Riemannian manifolds in terms of $S=d$ and $T=\delta$
as in Eq. \eqref{eq: hodge for any mfd}, we have

\begin{theorem}[Geometrically intrinsic div-curl lemma A, Theorem 3.3 in \cite{chenli}]\label{thm: geometric div-curl}
Let $(M,g)$ be an $n$-dimensional Riemannian manifold.
Let $\{\omega^\epsilon\}, \{\tau^\epsilon\} \subset L^2_{\rm loc}(M;\bigwedge^qT^*M)$ be two families
of differential $q$-forms such that
\begin{enumerate}
\item[\rm (i)]
$\omega^\epsilon \rightharpoonup \overline{\omega}$
and $\tau^\epsilon \rightharpoonup \overline{\tau}$ weakly in $L^2_{\rm loc}(M; \bigwedge^{q}T^*M)${\rm ;}
\item[\rm (ii)] There are compact subsets of the corresponding Sobolev spaces, $K_d$ and $K_\delta$, such that
\begin{equation*}
\begin{cases}
\{d\omega^\epsilon\}\subset K_d \Subset H^{-1}_{\rm loc}(M;\bigwedge^{q+1}T^*M),\\[1.5mm]
\{\delta\tau^\epsilon\}\subset K_\delta \Subset H^{-1}_{\rm loc}(M;\bigwedge^{q-1}T^*M).
\end{cases}
\end{equation*}
\end{enumerate}
Then $\langle\omega^\epsilon, \tau^\epsilon\rangle$ converges to $\langle\overline{\omega}, \overline{\tau}\rangle$
in $\mathcal{D}'(M)$, that is,
\begin{equation*}
\int_M \langle\omega^\epsilon, \tau^\epsilon\rangle\psi\, {\rm d}V_g
\longrightarrow \int_M \langle\overline{\omega}, \overline{\tau}\rangle \psi\, {\rm d}V_g
\qquad\,\, \mbox{for any $\psi \in C^\infty_{c}(M)$}.
\end{equation*}
\end{theorem}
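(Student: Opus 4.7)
The plan is to apply the functional-analytic compensated compactness theorem (Theorem \ref{thm_abstract compensated compactness}) with the assignment $S = d$ and $T = \delta$. Since $\sdag = \delta$ and $\tdag = d$, the algebraic orthogonality condition (Op 1) reduces to the cochain/chain conditions $d\circ d = 0$ and $\delta\circ\delta = 0$. The analytic estimate (Op 2) should come from G{\aa}rding's inequality for the Hodge Laplacian $\Delta = d\delta + \delta d$, but this requires a compact ambient space, so the first step is to localize.

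Fix a test function $\psi \in C^\infty_c(M)$, choose a cutoff $\chi \in C^\infty_c(M)$ with $\chi \equiv 1$ on a neighborhood of $\text{supp}(\psi)$, and pick a relatively compact open $W \Subset M$ with smooth boundary containing $\text{supp}(\chi)$. Regard $W$ as an open subset of a closed oriented Riemannian manifold $(N, h)$ with $h|_W = g|_W$ (for example, take $N$ to be the Riemannian double of $\bar{W}$ with a collar metric agreeing with $g$ near $\partial W$). On $N$, take
\[
\hil = L^2(N;\textstyle\bigwedge^q T^*N), \quad Y = H^{-1}(N;\bigwedge^{q+1}T^*N),
\]
\[
Z = H^{-1}(N;\bigwedge^{q-1}T^*N), \quad \htt = H^{-1}(N;\bigwedge^q T^*N).
\]
Ellipticity of $\Delta$ on the closed $N$, shifted by one Sobolev order, yields $\|\omega\|_{L^2(N)} \leq C(\|d\omega\|_{H^{-1}} + \|\delta\omega\|_{H^{-1}} + \|\omega\|_{H^{-1}})$, and Rellich--Kondrachov combined with duality gives the compact embedding $\hil \Subset \htt$. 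This verifies (Op 2).

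Finally, define $u^\epsilon := \psi\,\omega^\epsilon$ and $v^\epsilon := \chi\,\tau^\epsilon$, both extended by zero from $W$ to $N$; hypothesis (i) supplies (Seq 1). For (Seq 2), Leibniz gives $du^\epsilon = d\psi \wedge \omega^\epsilon + \psi\, d\omega^\epsilon$ and an analogous decomposition of $\delta v^\epsilon$ into $\chi\,\delta\tau^\epsilon$ plus a zeroth-order term in $\tau^\epsilon$ with smooth compactly supported coefficients. The ``principal'' pieces $\psi\, d\omega^\epsilon$ and $\chi\,\delta\tau^\epsilon$ inherit $H^{-1}$ pre-compactness from hypothesis (ii), since multiplication by a smooth compactly supported function is bounded on $H^{-1}_{\loc}(M)$; the remaining pieces are uniformly bounded in $L^2(N)$ and hence pre-compact in $H^{-1}(N)$ by Rellich. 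Theorem \ref{thm_abstract compensated compactness} then yields $\langle u^\epsilon, v^\epsilon\rangle_{\hil} \to \langle \psi\bar\omega, \chi\bar\tau\rangle_{\hil}$, and because $\chi \equiv 1$ on $\text{supp}(\psi)$ and $h = g$ on $W$, the two sides are precisely $\int_M \langle\omega^\epsilon, \tau^\epsilon\rangle \psi\, dV_g$ and its limit. The main obstacle is this localization/extension step --- neither the compact embedding nor the elliptic estimate is available on a possibly noncompact $M$, and one must verify that cutting off by $\psi$ and $\chi$ does not destroy the $H^{-1}$ pre-compactness of the differential constraints; once this is arranged, the conclusion is a direct consequence of the abstract theorem.
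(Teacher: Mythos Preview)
Your proposal is correct and follows essentially the same approach as the paper: reduce to a closed oriented manifold by localization (the paper simply remarks that, since the conclusion is distributional, one ``may assume $M$ to be oriented and closed without loss of generality''), then apply the abstract Theorem~\ref{thm_abstract compensated compactness} with $S=d$, $T=\delta$, verifying (Op~1) via $d\circ d=0$, $\delta\circ\delta=0$ and (Op~2) via the ellipticity of the Hodge Laplacian $\Delta=d\delta+\delta d$. Your explicit treatment of the localization---the doubling construction, the two cutoffs $\psi,\chi$, and the Leibniz-rule verification that (Seq~2) survives the cutoff---spells out in detail what the paper compresses into a single sentence, but the underlying argument is the same.
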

	
Since the conclusion for the weak continuity in Theorem \ref{thm: geometric div-curl} is in the distributional sense,
we may assume $M$ to be oriented and closed without loss of generality in the proof.
Here and in the sequel, $W^{k,p}(M; \qtens)$ denotes the Sobolev space of differential $q$-forms
with $W^{k,p}$--regularity.
Then $\Delta: \Omega^q(M)\mapsto \Omega^q(M)$, as well as $\Delta: W^{k,p}(M;\qtens) \mapsto W^{k-2,p}(M;\qtens)$,
for $0\leq q\leq n=\dim(M)$, is elliptic, which is crucial for the verification of condition (Op 2)
in Theorem \ref{thm_abstract compensated compactness}.
As is well-known, the analogous operator on semi-Riemannian manifolds
is not elliptic in general, so that Theorem \ref{thm: geometric div-curl} may not be extended
directly to the semi-Riemannian settings.

Next, we state an endpoint case of the above theorem, for which the first-order differential
constraints are prescribed in {\em non-reflexive} Banach spaces $W^{-1,1}$,
in contrast to condition (Seq 2) in Theorem \ref{thm_abstract compensated compactness}.
The underlying argument for the proof essentially follows from that in Conti-Dolzmann-M\"{u}ller (\cite{CDM}),
which employs a Lipschitz truncation argument and the pre-compactness theorems for $L^1$ ({\it e.g.},
Chacon's biting lemma, the Dunford-Pettis theorem, etc.) to reduce to the reflexive case.

\begin{theorem}[Geometrically intrinsic div-curl lemma B, Theorem 6.1 in \cite{chenli}]\label{thm_ generalised critical case, div curl lemma}
Let $(M,g)$ be an $n$-dimensional manifold.
Let $\{\omega^\epsilon\}\subset L^2_{\rm loc}(M;\qtens)$
and $ \{\tau^\epsilon\}\subset L^2_{\rm loc}(M;\qtens)$ be two families of  differential $q$-forms. Suppose that
\begin{enumerate}
\item[\rm (i)]
$\omega^\epsilon \rightharpoonup \overline{\omega}$ and $\tau^\epsilon \rightharpoonup \overline{\tau}$ weakly
in $L^2_{\rm loc}(M;\qtens)$ as $\e\to 0${\rm ;}
\item[\rm (ii)] There are compact subsets of the corresponding Sobolev spaces, $K_d$ and $K_\delta$, such that
\begin{equation*}
\begin{cases}
\{d\omega^\epsilon\}\subset K_d \Subset W^{-1,1}_{\rm loc}(M;\bigwedge^{q+1}T^*M),\\[1.5mm]
\{\delta\tau^\epsilon\}\subset K_\delta \Subset W^{-1,1}_{\rm loc}(M;\bigwedge^{q-1}T^*M);
\end{cases}
\end{equation*}
\item[\rm (iii)] $\{\langle\omega^\epsilon,\tau^\epsilon\rangle\}$ is  equi-integrable.
\end{enumerate}
Then $\langle\omega^\epsilon, \tau^\epsilon\rangle$ converges to $\langle\overline{\omega}, \overline{\tau}\rangle$
in $\mathcal{D}'(M)$, that is,
\begin{equation*}
\int_M \langle\omega^\epsilon, \tau^\epsilon\rangle\psi\, {\rm d}V_g
\longrightarrow \int_M \langle\overline{\omega}, \overline{\tau}\rangle \psi\, {\rm d}V_g
\qquad\,\, \mbox{for any $\psi \in C^\infty_{c}(M)$}.
\end{equation*}
\end{theorem}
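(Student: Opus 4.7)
The plan is to reduce Theorem~\ref{thm_ generalised critical case, div curl lemma} to the reflexive version, Theorem~\ref{thm: geometric div-curl}, by a Lipschitz-truncation scheme in the spirit of Conti-Dolzmann-M\"uller \cite{CDM}, using the equi-integrability hypothesis (iii) to absorb the resulting error. Since distributional convergence is a local property, I first multiply by a cutoff $\psi \in C^\infty_c(M)$ and, noting that the commutators of multiplication by $\psi$ with $d$ and $\delta$ are zeroth-order, reduce to the case of an oriented closed Riemannian manifold. The Hodge decomposition then expresses
$$
\omega^\e = d A^\e + \delta B^\e + h^\e_\omega, \qquad \tau^\e = d C^\e + \delta D^\e + h^\e_\tau,
$$
with potentials $A^\e, B^\e, C^\e, D^\e$ one order smoother in the Sobolev scale and with $h^\e_\omega, h^\e_\tau$ lying in the finite-dimensional space of harmonic forms.

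The heart of the argument is the construction, for each parameter $\lambda>0$, of truncated families $\{\omega^\e_\lambda\}$ and $\{\tau^\e_\lambda\}$ with the following properties: (a) $\omega^\e_\lambda = \omega^\e$ off a set $E^\e_\lambda$ with $|E^\e_\lambda| \leq C/\lambda$, and analogously $\tau^\e_\lambda = \tau^\e$ off $F^\e_\lambda$ with $|F^\e_\lambda| \leq C/\lambda$; (b) $\{\omega^\e_\lambda\}$ and $\{\tau^\e_\lambda\}$ remain bounded in $L^2_{\loc}(M;\qtens)$; and (c) $\{d\omega^\e_\lambda\}$ and $\{\delta\tau^\e_\lambda\}$ are bounded in $L^2_{\loc}$, hence pre-compact in $H^{-1}_{\loc}$. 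Following \cite{CDM}, this is done by applying an Acerbi-Fusco type Lipschitz truncation to the Hodge potentials $A^\e, D^\e$, using Hardy-Littlewood maximal function estimates to choose the exceptional sets and converting the $W^{-1,1}$ bounds on $\{d\omega^\e\}$, $\{\delta\tau^\e\}$ into measure control on $|E^\e_\lambda|$, $|F^\e_\lambda|$. On a manifold, the construction is performed chart-by-chart and glued via a fixed partition of unity. Theorem~\ref{thm: geometric div-curl} then yields, for each fixed $\lambda$,
$$
\langle \omega^\e_\lambda, \tau^\e_\lambda\rangle \longrightarrow \langle \overline{\omega}_\lambda, \overline{\tau}_\lambda\rangle \quad\text{in } \mathcal{D}'(M) \text{ as } \e \to 0,
$$
where $\overline{\omega}_\lambda, \overline{\tau}_\lambda$ are the weak $L^2_{\loc}$ limits of the truncated sequences.

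To conclude, for any test function $\psi \in C^\infty_c(M)$, I split
$$
\int_M \langle \omega^\e, \tau^\e\rangle \psi\, {\rm d}V_g - \int_M \langle \overline{\omega}, \overline{\tau}\rangle \psi\, {\rm d}V_g
$$
into the contribution of the truncated pairing (vanishing as $\e\to 0$ by the previous step) plus two error terms supported on $E^\e_\lambda \cup F^\e_\lambda$. The equi-integrability assumption (iii), together with the uniform measure decay $|E^\e_\lambda|, |F^\e_\lambda| \to 0$ as $\lambda \to \infty$, yields
$$
\sup_\e \int_{E^\e_\lambda \cup F^\e_\lambda} |\langle \omega^\e, \tau^\e\rangle|\, {\rm d}V_g \longrightarrow 0 \quad \text{as } \lambda \to \infty,
$$
while a Dunford-Pettis argument (or Chacon's biting lemma applied to $\{\langle \omega^\e,\tau^\e\rangle\}$) identifies the $L^1$-weak limit with $\langle \overline{\omega}, \overline{\tau}\rangle$ and controls the analogous error for the limits $\overline{\omega}_\lambda \to \overline{\omega}$, $\overline{\tau}_\lambda \to \overline{\tau}$. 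A diagonal selection in $(\e,\lambda)$ then closes the argument. The principal obstacle I anticipate is the Lipschitz-truncation step itself: one must truncate the Hodge potentials in a way that simultaneously upgrades the differential constraints from $W^{-1,1}$ to $L^2$, keeps the discrepancy $\omega^\e - \omega^\e_\lambda$ supported on a set of measure $O(1/\lambda)$, and patches coherently across coordinate charts without introducing curvature-type remainders that are not absorbed by lower-order terms.
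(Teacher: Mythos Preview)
Your proposal is correct and follows essentially the same approach that the paper indicates: the paper does not give a detailed proof here but states that the argument ``essentially follows from that in Conti--Dolzmann--M\"uller \cite{CDM}, which employs a Lipschitz truncation argument and the pre-compactness theorems for $L^1$ (\emph{e.g.}, Chacon's biting lemma, the Dunford--Pettis theorem, etc.) to reduce to the reflexive case,'' and this is precisely what you outline. Your added detail---localizing to a closed manifold, Hodge-decomposing, truncating the potentials $A^\e, D^\e$ so that the truncated forms are exactly closed/coclosed, and then invoking Theorem~\ref{thm: geometric div-curl}---is the natural way to execute this strategy on a manifold, and your anticipated obstacle (patching the truncation across charts without uncontrolled remainders) is indeed the only point requiring care.
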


To conclude this section, we remark that the preceding intrinsic div-curl lemmas
(Theorems \ref{thm: geometric div-curl}--\ref{thm_ generalised critical case, div curl lemma})
can be extended to the case of general H\"{o}lder exponents,
namely that $\{\omega^\e\}\subset L^r_\loc(M; \qtens)$ and $\{\tau^\e\}\subset L^s_\loc(M;\qtens)$
with $\frac{1}{r}+ \frac{1}{s}=1$.
Since $L^r$ is not a Hilbert space unless $r=2$,
such generalizations cannot be directly deduced from  Theorem \ref{thm_abstract compensated compactness}.
Nevertheless, they follow from similar arguments, with slight modifications in light of Eq. \eqref{eq: hodge for any mfd}.
We refer to Theorems 3.7, Theorem 6.2, and Appendix in \cite{chenli} for the details;
also see \S 5 in
\cite{KY13}.

\section[\small Isometric Immersions \& the GCR Equations]{Isometric Immersions
of Riemannian Manifolds \\and the Gauss-Codazzi-Ricci (GCR) Equations}

In this section, we briefly discuss the geometric preliminaries
for the isometric immersions of Riemannian manifolds.
We restrain ourselves to the constructions directly related to our subsequent development.
We refer to the classical texts \cite{DoC92, Eisenhart, Spi79} for more detailed treatments on differential geometry,
to Han-Hong \cite{HanHon06}, as well as the classical papers by Nash \cite{Nas54, Nas56}, for isometric immersions.

From now on, let $M$ be an $n$-dimensional Riemannian manifold, and let $g$ be a Riemannian metric on $M$.
Motivated by the applications in nonlinear elasticity ({\it cf.} \cite{Ball, CSW10-CMP, Cia08, Mar05}),
we consider the metrics of weaker regularity: $g\in W^{1,p}_\loc$.
A $W^{2,p}_\loc$ map $f: (M,g) \mapsto (\R^{n+k}, g_0)$ for the Euclidean metric $g_0$
is an {\em immersion} if the differential $df_P$ is injective for each $P\in M$,
and it is an {\em embedding} if $f$ itself is also injective.
Moreover, $f$ is {\em isometric} if
\begin{equation}
df \otimes df = g,
\end{equation}
that is, for any $P \in M$ and vector fields $X,Y$ on $M$,
\begin{equation}\label{eq: def of isometric}
df_P(X) \cdot df_P(Y) = g_P(X, Y),
\end{equation}
where $g_P$ denotes the metric evaluated at $P$,
and $\cdot$ is the Euclidean dot product on $\rnk$.
Notice that Eq. \eqref{eq: def of isometric} makes sense in the distributional sense
when $p^*=\frac{np}{n-p}\geq 2$, and that $g$ has a continuous representative when $p>n$,
in view of the Sobolev embeddings $W^{2,p}(\R^n)\emb W^{1,q}(\R^n)$
for $1 \leq q \leq p^*$ and $W^{1,p} (\R^n) \emb C^0(\R^n)$ for $p>n$.

In differential geometry,
the description of an isometric immersion is equivalent to the determination
of how $\rnk$ -- viewed as its own tangent spaces -- can be split into
the immersion-independent and immersion-dependent geometry of $M$.
More precisely,
for each point $P\in M$, we have the vector space direct sum
\begin{equation}
\rnk = T_PM \bigoplus T_PM^\perp,
\end{equation}
where $T_PM$ is the {\em tangent space} of $M$ at $P$,
and $T_PM^\perp$ is its complement in $\rnk$,
interpreted as the {\em normal space}.

To study the isometric immersions,
two approaches have been employed from the PDE point of view.
One is to deal directly with Eq. \eqref{eq: def of isometric},
which is a first-order, nonlinear, generally under-determined PDE;
the other is to derive a PDE system by taking two more derivatives
and solves for the compatibility conditions.
The former approach has been employed by Nash \cite{Nas54, Nas56}
to establish the existence of $C^1$ and $C^k$ isometric embeddings
for large enough co-dimensions; also see G\"{u}nther \cite{Gunther89}
for a simplification.
For the latter approach,
the compatibility conditions read schematically as follows:
\begin{align*}
\quad 0 &= \text{Curvature of } \rnk \\
&\Longleftrightarrow \,  \begin{cases}
\text{Curvature in (tangential, tangential) direction}=0,\\
\text{Curvature in (tangential, normal) direction}=0, \\
\text{Curvature in (normal, normal) direction}=0.
\end{cases}\qquad(\spadesuit)
\end{align*}

To continue, we use Latin letters $X,Y,Z,W,\ldots$ to denote the tangential vector fields
in $\Gamma(TM)$, write Greek letters $\xi, \eta, \zeta, \ldots$ for the  normal
vector fields in $\Gamma(TM^\perp)$, and identify vector fields with
first-order differential operators.
Then $XY$ and $X\xi$ are vector fields in $\Gamma(TM)$
and $\Gamma(TM^\perp)$, respectively.
Let us consider the well-known geometric quantities:

\begin{itemize}
\item
{\bf Immersion-independent quantities.} Taking one derivative in $g$ leads to
the Levi-Civita connection (or covariant derivative) $\na: \Gamma(TM)\times \Gamma(TM) \mapsto \Gamma(TM)$,
and taking one further derivative gives us the Riemann curvature tensor
$R:  \Gamma(TM)\times\Gamma(TM)\times\Gamma(TM)\times\Gamma(TM) \mapsto \R$.

\item
{\bf Immersion-dependent quantities.} For given $\na, R$ as above, consider the isometric immersion $f:(M,g)\emb (\rnk, g_0)$.
We define the {\em second fundamental form} $B: \Gamma(TM) \times \Gamma(TM) \mapsto \Gamma(TM^\perp)$ by
\begin{equation}
B(X,Y):= XY-\na_X Y,
\end{equation}
and the {\em normal connection} $\na^\perp:\Gamma(TM) \times \Gamma(TM^\perp) \mapsto \Gamma(TM^\perp)$ by
\begin{equation}
\na^\perp_X\xi := \text{ projection of } X\xi \text{ (at each point) onto } TM^\perp.
\end{equation}
\end{itemize}
Then the right-hand sides of the schematic equations in $(\spadesuit)$ can be expressed
via the quantities $(g, \na, R, B, \na^\perp)$, resulting in the {\em Gauss},
{\em Codazzi}, and {\em Ricci} equations in  \eqref{eqn_gauss},  \eqref{eqn_codazzi}, and
 \eqref{eqn_ricci}, respectively, below; see also Theorems 2.1--2.2 in \cite{chenli}
 and \S 6 in \cite{DoC92}.

\begin{proposition}\label{propn: GCR equations}
Suppose that $f\in W^{2,p}_\loc(M,g; \rnk, g_0)$ for $p>n$ is an isometric immersion.
Then the following compatibility conditions are satisfied in $\mathcal{D}'${\rm :}
\begin{eqnarray}
&&\langle B(X,W), B(Y,Z)\rangle - \langle B(Y,W), B(X,Z) \rangle =R(X,Y,Z,W), \label{eqn_gauss}\\[3mm]
&&XB(Y,Z,\eta) - YB(X,Z,\eta)\nonumber\\
&&= ([X,Y],Z,\eta)	- B(X,\na_YZ,\eta) - B(X,Z,\nap_Y\eta)\nonumber\\
&&\quad  + B(Y,\na_XZ,\eta) + B(Y,Z,\nap_X\eta), \label{eqn_codazzi}\\[3mm]
&&X\langle \nap_Y \xi,\eta\rangle - Y \langle \nap_X\xi,\eta \rangle\nonumber\\
&&= \langle \nap_{[X,Y]}\xi,\eta \rangle - \langle \nap_X \xi, \nap_Y \eta\rangle
+ \langle \nap_Y\xi, \nap_X \eta\rangle\nonumber\\
&&\quad
+B(X\xi - \nabla^\perp_X \xi, Y, \eta) - B(X\eta - \nabla^\perp_X \eta, Y, \xi),
\label{eqn_ricci}
\end{eqnarray}
where $X,Y,Z,W\in \Gamma(TM)$, $\eta,\xi \in \Gamma(TM^\perp)$, and $[X,Y]=XY-YX$ is the Lie bracket.
Here and in the sequel, we have used $\langle\cdot,\cdot\rangle$
to denote all the inner products induced by  metrics, and $B(Y,Z,\eta):=\langle B(Y,Z),\eta\rangle$.
\end{proposition}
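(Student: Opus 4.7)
The plan is to first recall the classical derivation of the GCR equations in the smooth setting, where they are a direct consequence of the ambient flatness of $\rnk$, and then extend the derivation to the $\wtwoploc$ regularity class via a mollification-and-limit argument. The condition $p>n$ is essential throughout, since it promotes the Sobolev embedding $\wonep_{\loc} \emb C^0_\loc$ and endows $\woneploc$ with the Banach algebra structure needed to handle the nonlinear expressions in $df$ and $g^{-1}$.

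In the smooth category, I would let $\bar{\nabla}$ denote the flat Euclidean connection on $\rnk$ and exploit the standard decompositions $\bar{\nabla}_X Y = \nabla_X Y + B(X,Y)$ for $X,Y \in \vf$ and $\bar{\nabla}_X \xi = -A^\xi(X) + \nap_X \xi$ for $\xi \in \vfn$, where $A^\xi$ is the shape operator characterized by $\langle A^\xi X, Y\rangle = \langle B(X,Y), \xi\rangle$. Ambient flatness reads
$$
[\bar{\nabla}_X, \bar{\nabla}_Y] W - \bar{\nabla}_{[X,Y]} W = 0
$$
for every vector field $W$ on $\rnk$ along $M$. Choosing $W = Z \in \vf$ and separating tangential and normal components yields \eqref{eqn_gauss} and \eqref{eqn_codazzi} respectively; choosing $W = \xi \in \vfn$ and projecting onto the normal bundle delivers \eqref{eqn_ricci}. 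This is precisely the content of the schematic equation $(\spadesuit)$.

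To descend to the $\wtwoploc$ case, I would fix a coordinate chart $U \subset M$ and mollify: let $f^\epsilon := f \star \rho_\epsilon$ with a standard mollifier on compactly contained subsets. Then $f^\epsilon \in C^\infty$ and $f^\epsilon \to f$ in $\wtwoploc$. The induced metric $g^\epsilon := df^\epsilon \otimes df^\epsilon$ converges to $g$ in $\woneploc$, and the Banach algebra property of $\woneploc$, together with uniform non-degeneracy on compact sets for small $\epsilon$, yields $(g^\epsilon)^{-1} \to g^{-1}$ in $\woneploc$ as well. Consequently, the Christoffel symbols, the connection $\nabla^\epsilon$, the Riemann tensor $R^\epsilon$, the second fundamental form $\bep$, and the normal connection $\nep$ all converge in $\lploc$ (indeed in $\woneploc$ for those quantities depending only on $df^\epsilon$) to the objects attached to $f$. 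Applying the smooth derivation above to each $f^\epsilon$ gives the pointwise identities \eqref{eqn_gauss}--\eqref{eqn_ricci} with every object replaced by its $\epsilon$-version.

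The final step is to pass to the limit in $\dis$. Each term on both sides of \eqref{eqn_gauss}--\eqref{eqn_ricci}, when paired against a smooth compactly supported test function, is a finite sum of products in which at least one factor converges strongly in $\lploc$ while the remaining factors remain uniformly $\lploc$-bounded---and in many cases continuous, thanks to $\woneploc \emb C^0_\loc$. Distributional convergence then follows by routine $L^p \times L^{p'}$ or $L^p \times C^0$ duality. I expect the main obstacle to lie in the Codazzi and Ricci identities, where terms of the form $XB(Y,Z,\eta)$ or $X\langle \nap_Y \xi, \eta\rangle$ involve differentiating objects a priori only in $\lploc$; the natural remedy is to transfer the $X$-derivative onto the test function via integration by parts, after which every residual expression is a product of $\lploc$ factors with at least one continuous factor and passes to the limit as above. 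This delivers \eqref{eqn_gauss}--\eqref{eqn_ricci} in $\dis$ and completes the plan.
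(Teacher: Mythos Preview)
The paper does not supply its own proof of this proposition: it is stated as background and immediately referred to Theorems~2.1--2.2 of \cite{chenli} and \S 6 of \cite{DoC92}. Your proposal is the standard route---derive the smooth identities from ambient flatness via the tangential/normal splitting of $\bar{\nabla}$, then mollify $f$ in a chart and pass to the limit using the Banach-algebra property of $W^{1,p}_{\loc}$ for $p>n$---and it is correct in outline.

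Two points deserve a sentence of care when you write it out. First, the normal bundle moves with~$\epsilon$: the frames $\{\eta_\alpha^\epsilon\}$ for $T(f^\epsilon M)^\perp$ are not the same as $\{\eta_\alpha\}$, so you must argue (via Gram--Schmidt applied to $\{\partial_i f^\epsilon\}$ together with a fixed ambient frame, and again the algebra property of $W^{1,p}_{\loc}$) that $\eta_\alpha^\epsilon \to \eta_\alpha$ in $W^{1,p}_{\loc}$; this is what makes $B^\epsilon \to B$ and $\nabla^{\perp,\epsilon}\to\nabla^\perp$ in $L^p_{\loc}$ genuinely available. Second, on the right-hand side of the Gauss equation the Riemann tensor $R^\epsilon$ contains $\partial\Gamma^\epsilon$ terms that converge only in $W^{-1,p}_{\loc}$, not in $L^p_{\loc}$; the distributional limit still goes through, but the convergence of $R^\epsilon\to R$ is in $\mathcal{D}'$, not in any Lebesgue space, and your write-up should say so explicitly. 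With these two clarifications, the plan is complete.
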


From the PDE perspectives, we view the immersion-dependent quantities $(B,\na^\perp)$
as to be solved, and the immersion-independent quantities $(g, \na, R)$ as being fixed.
Indeed, in the isometric immersion problem, metric $g$ is prescribed,
so are all the immersion-independent quantities; thus, the immersion-dependent geometry determines
the whole of the isometric immersion.
Therefore, in the sequel, {\em the GCR equations are always considered as a first-order
nonlinear PDE system for $(B,\na^\perp)$}.

Proposition \ref{propn: GCR equations} says that the GCR equations form
a necessary condition for the existence of isometric immersions.
The converse is known as the ``{\em realization problem}'' in elasticity:
Given
$(B, \na^\perp)$  satisfying the GCR equations,
construct an isometric immersion ({\it i.e.}, design an elastic body) whose immersion-dependent geometry
is prescribed by $(B, \na^\perp)$.
This problem for both $C^\infty$ and $W^{1,p}_\loc$ metrics has been answered
in the affirmative, globally on simply-connected manifolds;
see Tenenblat \cite{Ten71} for the former, and Mardare \cite{Mar05, Mar07} and Szopos \cite{Szo08}
for the latter.
In \cite{chenli}, we adapt the geometric arguments in \cite{Ten71} to re-prove
the realization theorem in $W^{1,p}_\loc$ regularity, which simplifies the proofs
in \cite{Mar05, Mar07, Szo08}. Moreover, this method sheds light on the weak rigidity
problem of isometric immersions, which is the main content of \S 4.

Finally, we briefly sketch the main tool in \cite{Ten71} -- {\em the Cartan formalism} -- which
serves as a bridge between the geometric problem of isometric immersions and the PDEs (GCR equations).
In full generality, consider a vector bundle $E$ over $M$ of fibre $\R^k$, trivialized
on a local chart $U \subset M$, {\it i.e.}, $E|_U \cong U \times \R^k$ as a diffeomorphism.
Let $\{\partial_i\} \subset \Gamma(TU)$ be an orthonormal frame,
and let $\{\omega^i\} \subset \Omega^1(U)$ be its dual (co-frame).
Then we choose  $\{\eta_{n+1}, \ldots, \eta_{n+k}\} \subset \Gamma(E)$ as an orthonormal
basis for fibre $\R^k$, and set
\begin{eqnarray}
&&\omega^i_j (\partial_k):= \langle \na_{\partial_k}\partial_j, \partial_i \rangle,
\label{eqn_def of connection form 1}\\[1mm]
&&\omega^i_\alpha(\partial_j)=-\omega^\alpha_i(\partial_j):=\langle B(\partial_i, \partial_j),\eta_\alpha\rangle,
\label{eqn_def of connection form 2}\\[1mm]
&&\omega^\alpha_\beta (\partial_j):= \langle\na^E_{\partial_j}\eta_\alpha, \eta_\beta \rangle,
\label{eqn_def of connection form 3}
\end{eqnarray}
where $\na^E$ is the bundle connection: $\na^E=\na^\perp$ for $E=TM^\perp=$ the {\em normal bundle}.
All these  constructions make sense in distributions
for $g\in W^{1,p}_\loc$.
Moreover, here and in the sequel, the following index convention is adopted:
\begin{equation*}
1\leq i,j \leq n;\qquad 1\leq a,b,c,d,e \leq n+k; \qquad n+1 \leq \alpha,\beta,\gamma \leq n+k.
\end{equation*}

In this setting, the GCR equations on bundle $E$ are equivalent to the following two systems,
known as the {\em first} and {\em second structural equations of the Cartan formalism}
({\it cf.} \cite{Ten71, Spi79, sternberg}):
\begin{eqnarray}
&& d\omega^i = \sum_j \omega^j \wedge \omega^i_j, \label{eqn_first structure eqn}\\
&& d\omega^a_b = - \sum_c \omega_b^c \wedge \omega^a_c  \label{eqn_second structure eqn}
\end{eqnarray}
for each $i, a, b$.
These equations can be represented compactly as first-order nonlinear Lie algebra-valued PDEs.
Denoting by $\mathfrak{so}(n+k)$ the Lie algebra of antisymmetric $(n+k) \times (n+k)$ matrices,
we can write \eqref{eqn_first structure eqn}--\eqref{eqn_second structure eqn} as
\begin{equation}\label{eq: structure eqns, invariant form}
dw=w\wedge W, \qquad dW+ W\wedge W = 0,
\end{equation}
where $W=\{w^a_b\} \in \Omega^1(U; \mathfrak{so}(n+k))$ which is known as the {\em connection one-forms},
$w=(\omega^1,\ldots,\omega^n, 0, \ldots, 0)^\top \in \Omega^1(U; \rnk)$,
and $\wedge$ operates by the wedge product on the factor of differential forms
and matrix multiplication on
the factor of the matrix Lie algebra, with respect to the factorization:
\begin{equation}\label{eq:lie algebra factorization}
\Omega^1(U; \mathfrak{so}(n+k)) \cong \Gamma(\bigwedge^1 T^*U \bigotimes \mathfrak{so}(n+k)).
\end{equation}
In other words, the structural equations recast in \eqref{eq: structure eqns, invariant form}
are also intrinsic, {\it i.e.},
independent of the choice of local moving frames/coordinates  $\{\p_i\}$ and $\{\eta_{\alpha}\}$.

\section[\small Weak Rigidity of Isometric Immersions]{Weak Rigidity of Isometric Immersions}

Finally, we discuss the weak rigidity of isometric immersions with
weaker regularity in $W^{2,p}_\loc$ as in the previous sections.

The {\em rigidity problem} of isometric immersions concerns the following:
{\it If $\{f^\e\}$ is a sequence of isometric immersions of a manifold $M$
into $(\rnk, g_0)$, which converges to a map $f: M \mapsto \rnk$ in a certain topology,
is $f$ still an isometric immersion?}
This problem has a history of celebrated results.
Nash in \cite{Nas54} showed that the $C^1$ isometric immersions are not rigid.
In particular, any $C^\infty$ short ({\it i.e.}, distance-shrinking) immersion is $C^0$-close to an $C^1$ isometric
immersion; see also \cite{torus} for a recent computer visualisation.
In the same sense, Borisov in \cite{borisov} proved that $C^{1,\alpha}$ isometric immersions
are not rigid for $\alpha>0$ below a certain value, and this value has been
improved in \cite{conti-delellis}.
On the other hand, the $C^{1,\alpha}$ isometric immersions for large enough $\alpha$ are classically known
to be rigid; {\it cf.} \cite{Nas56} and the references therein.
More recently, deep connections have been established between the transition phenomenon
from the non-rigidity to rigidity of the $C^{1,\alpha}$ isometric immersions and Onsager's
conjecture (concerning the dissipative  weak solutions to the Euler equations in fluid dynamics).
We refer the readers to \cite{delellis} and the references cited therein  for such developments.

Our focus is on the {\em weak rigidity problem} motivated by applications.
In this case, for the sequence of isometric immersions $\{f^\e\}$ that is weakly and locally convergent in $W^{2,p}$ for $p>n=\dim(M)$,
we ask if the weak limit $\bar{f}$ is still a $W^{2,p}_\loc$ isometric immersion.
Indeed, we answer the question in the affirmative, thanks to the {\em locally uniform $L^p$ bounds on the immersion-dependent geometry}.
This is in the spirit of the works by Langer \cite{Langer} and the recent generalization by Breuning \cite{breuning}.

Our result can be formulated as follows:

\begin{theorem}[Corollary 5.2 in \cite{chenli}]\label{thm: weak rigidity}
Let $M$ be an $n$-dimensional simply-connected Riemannian manifold
with $W^{1,p}_{\rm loc}$ metric $g$ for $p>n$.
Suppose that $\{f^\epsilon\}$ is a family of isometric immersions of $M$ into $\rnk$
with Euclidean metric, uniformly bounded in $W^{2,p}_\loc(M;\rnk)$,
whose second fundamental forms and normal connections are $\{B^\epsilon\}$ and $\{\nep\}$, respectively.
Then, after passing to subsequences, $\{f^\epsilon\}$ converges to $\bar{f}$
weakly in $W_{{\rm loc}}^{2,p}$ which is still an isometric
immersion $\bar{f}: (M,g) \rightarrow \rnk$.
Moreover, the corresponding second fundamental form $\bar{B}$ is a weak limit of $\{B^\epsilon\}$,
and the corresponding normal connection $\overline{\na^\perp}$
is a weak limit of $\{\nep\}$,
both taken in the weak topology in $L^p_{\rm loc}$.
\end{theorem}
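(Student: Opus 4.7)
My plan is to reduce the weak rigidity of isometric immersions to the weak rigidity of the Gauss--Codazzi--Ricci (GCR) equations, or equivalently to the Cartan structural equations, and then invoke the realization theorem on simply-connected manifolds to reconstruct $\bar{f}$.

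First, I would use Banach--Alaoglu on the assumed $W^{2,p}_{\rm loc}$-bound to extract a subsequence with $f^\e \weak \bar{f}$ in $W^{2,p}_{\rm loc}$, $B^\e\weak \bar{B}$ and $\nep\weak\overline{\nabla^\perp}$ in $L^p_{\rm loc}$. Since $p>n$, the compact Sobolev embedding $W^{2,p}\Subset C^{1,\alpha}_{\rm loc}$ gives strong convergence $df^\e\to d\bar{f}$ in $C^0_{\rm loc}$, and passing to the limit in the isometry relation $df^\e\otimes df^\e = g$ immediately yields $d\bar{f}\otimes d\bar{f} = g$; the positive-definiteness of $g$ then forces $d\bar{f}$ to be injective, so $\bar{f}$ is a $W^{2,p}_{\rm loc}$ isometric immersion.

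The substantive step is to identify $(\bar{B},\overline{\nabla^\perp})$ with the second fundamental form and normal connection of $\bar{f}$. I would handle this through the Cartan formalism of \S 3: on each local chart, assemble the $\mathfrak{so}(n+k)$-valued connection one-forms $W^\e$, which are bounded in $L^p_{\rm loc}$, and use the second structural equation $dW^\e + W^\e\wedge W^\e = 0$ to conclude that $\{dW^\e\}$ is bounded in $L^{p/2}_{\rm loc}$, hence pre-compact in $H^{-1}_{\rm loc}$ by Rellich--Kondrachov (since $p>n\ge 2$). I would then apply the intrinsic div--curl lemma, Theorem \ref{thm: geometric div-curl} -- or its conjugate-exponent extension alluded to at the end of \S 2 -- to each scalar pairing appearing in $W^\e\wedge W^\e$, thereby obtaining $W^\e\wedge W^\e \weak \bar{W}\wedge\bar{W}$ in $\mathcal{D}'$. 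Consequently, $\bar{W}$ satisfies the structural equations, equivalently $(\bar{B},\overline{\nabla^\perp})$ solves the GCR system of Proposition \ref{propn: GCR equations} in the sense of distributions.

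Finally, since $M$ is simply-connected, the $W^{1,p}$ realization theorem (Tenenblat; Mardare--Szopos) produces an isometric immersion $\tilde{f}:(M,g)\to\rnk$ whose immersion-dependent data are $(\bar{B},\overline{\nabla^\perp})$. Fixing the ambient rigid-motion ambiguity so that $\tilde{f}$ and $\bar{f}$ agree together with their differentials at a chosen basepoint, the Frobenius uniqueness for the Cartan ODE system forces $\tilde{f}=\bar{f}$, completing the identification. The hard step, as anticipated, is the application of the div--curl lemma to $W^\e\wedge W^\e$: one must verify the correct compactness of $dW^\e$ in $H^{-1}_{\rm loc}$ under the given $L^p$-hypothesis and carefully track the Lie-algebra indexing so that each scalar pairing really fits the framework of Theorem \ref{thm: geometric div-curl}. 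Once this nonlinear weak continuity is secured, the remaining steps reduce to standard Sobolev embedding and appeals to the realization and uniqueness theorems.
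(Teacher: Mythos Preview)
Your proposal is correct and closely tracks the paper's strategy---in fact it follows the Cartan-formalism route that the paper explicitly singles out, immediately after its main argument, as a simplified alternative. The paper's primary proof reformulates the GCR system via auxiliary tensor fields $V^{(B)},\Omega^{(B)},V^{(\nabla^\perp)},\Omega^{(\nabla^\perp)}$ and applies Theorem~\ref{thm: geometric div-curl} to each pairing $\langle V,\Omega\rangle$; your choice to work directly with $dW^\e+W^\e\wedge W^\e=0$ is precisely the compressed version the paper recommends. Both routes hinge on the same compensated-compactness step and the same interpolation ($L^{p/2}_{\rm loc}\Subset W^{-1,p'}_{\rm loc}$ together with boundedness in $W^{-1,p}_{\rm loc}$ yields pre-compactness in $H^{-1}_{\rm loc}$), so the difference is packaging rather than substance. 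Your direct verification that $\bar f$ is isometric via $W^{2,p}\Subset C^1$ is a clean shortcut the paper's outline does not make explicit.

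One genuine caution on your final identification step: invoking Frobenius uniqueness to force $\tilde f=\bar f$ requires both maps to satisfy the Cartan system with the \emph{same} connection form $\bar W$. For $\tilde f$ this holds by construction, but for $\bar f$ it presupposes that its second fundamental form and normal connection already equal $(\bar B,\overline{\nabla^\perp})$---which is exactly the identification you are trying to establish, so the argument as written is circular. The non-circular fix is direct and avoids realization altogether: since $df^\e\to d\bar f$ in $C^0_{\rm loc}$, Gram--Schmidt produces adapted orthonormal frames $A^\e\to\bar A$ in $C^0_{\rm loc}$ with $dA^\e\rightharpoonup d\bar A$ in $L^p_{\rm loc}$ (the frame being a smooth function of $df^\e$), whence $W^\e=(dA^\e)(A^\e)^\top\rightharpoonup(d\bar A)\bar A^\top$ in $L^p_{\rm loc}$; this identifies $\bar W$ as the connection form of $\bar f$, and hence $(\bar B,\overline{\nabla^\perp})$ as its immersion-dependent data, without circularity.
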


\begin{proof}[Outline of Proof]
We sketch the proof in three steps. For the details, we refer to \S 5 (Step $1$),  \S 4.2--\S 4.3 (Step $2$),
and \S 4.4 (Step $3$) in \cite{chenli}.

\smallskip
{\bf Step 1.}
We show the equivalence between the existence of $W^{2,p}_\loc$ isometric immersions and
the existence of $W^{1,p}_\loc$ solutions of the GCR equations
in distributions (Proposition \ref{propn: GCR equations}).
Then the weak rigidity of isometric immersions is translated to the weak rigidity of the GCR equations.
	
Indeed, at the end of \S 3, it is remarked that the GCR equations are equivalent
to the structural equations \eqref{eq: structure eqns, invariant form} of the Cartan formalism.
Hence, by Proposition \ref{propn: GCR equations}, Eq. \eqref{eq: structure eqns, invariant form} is a necessary
condition for the existence of isometric immersions.
Conversely, we follow the arguments in \cite{Ten71} to transform Eq. \eqref{eq: structure eqns, invariant form} into
first-order nonlinear PDEs on Lie groups.
More precisely, the isometric immersion $f$ satisfies the following equations (formulated as initial value problems)
for $A\in W^{1,p}_{\loc}(U;O(n+k))$, where $O(n+k)$ is the group of $(n+k) \times (n+k)$ symmetric matrices:
\begin{equation}\label{eq: pfaff and poincare}
W=dA \cdot A^\top, \qquad df= w\cdot A.
\end{equation}

The above two equations are known as the {\em Pfaff} and the {\em Poincar\'{e}} systems.
In the smooth case, they can be solved by the Frobenius theorem,
by checking that the solution distribution is involutive.
For the weak regularity case, we apply the
theorems due to Mardare \cite{Mar05, Mar07}
for the existence of solutions to Eq. \eqref{eq: pfaff and poincare}.
Then $df\in W^{1,p}_\loc \emb C^0_{\loc}$ for $n>p$,
and it is non-degenerate and distance-preserving, thanks to the Poincar\'{e} system
and the definition of $W$. This implies that $f$ is indeed an isometric immersion.

\smallskip
{\bf Step 2.}
The GCR equations in Proposition \ref{propn: GCR equations}  are reformulated to manifest the {\em div-curl structures},
which admits the application of the intrinsic div-curl lemma, Theorem \ref{thm: geometric div-curl}.

For this purpose, let us fix the tangential vector field $Z$ and normal vector fields $(\xi, \eta)$,
and define the $2$-tensor fields $V^{(B)}_{Z,\eta}, V^{(\na^\perp)}_{\xi,\eta}: \Gamma(TM)\times\Gamma(TM) \mapsto \Gamma(TM)$
and $1$-forms $\Omega^{(B)}_{Z,\eta}, \Omega^{(\na^\perp)}_{\xi,\eta}$ as follows:
\begin{eqnarray*}
&&\vb_{Z,\eta}(X,Y):= B(X,Z,\eta)Y-B(Y,Z,\eta)X, \\
&&\vnab_{\xi,\eta}(X,Y):=\langle \nap_Y \xi, \eta\rangle X - \langle \nap_X \xi, \eta\rangle Y,\\
&&\ob_{Z,\eta}:= -B(\bullet, Z, \eta),\\
&&\onab_{\xi,\eta}:= \langle \nap_{\bullet} \xi, \eta\rangle.
\end{eqnarray*}
For simplicity, we often drop the indices in both $\Omega$ and $V$ from now on.

To wit, these $\Omega$'s are nothing but the contractions of $(B,\na^\perp)$, and the $V$'s are obtained
by applying $\Omega$ to the $2$-Grassmannian ({\it i.e.}, the space of $2$-planes) in $TM$
and polarized in the anti-symmetric fashion.
Recall that the divergence can be defined intrinsically on manifolds
by $\di X:= \ast (\mathcal{L}_X dV_g)$, where $\mathcal{L}$ denotes the Lie derivative,
and the following well-known identities hold on manifolds:
\begin{equation*}
\begin{cases}
\mathcal{L}_X = d \circ \iota_X + \iota_X \circ d \qquad \text{ for } X\in \Gamma(TM),\\
d\alpha(X,Y)=X\alpha(Y) - Y\alpha(X) - \alpha([X,Y])\quad \text{ for } \alpha \in \Omega^1(M), X,Y\in \Gamma(TM).
\end{cases}
\end{equation*}
Thus, the divergence of $V$'s and the {\it generalized curl} ({\it i.e.}, $d$) of $\Omega$'s can be expressed as
\begin{eqnarray}
{\rm div}\big(\vb_{Z,\eta}(X,Y)\big)
  &=&YB(X,Z,\eta) - XB(Y,Z,\eta) \nonumber\\
  && +B(X,Z,\eta)\,\di Y - B(Y,Z,\eta)\,\di X,\qquad \label{eqn_div VB}\\[1mm]
{\rm div}\big(\vnab_{\xi,\eta}(X,Y)\big)
 &=&-Y\langle \nap_X\xi,\eta\rangle +X\langle \nap_Y\xi,\eta\rangle \nonumber\\
 &&+\langle \na^\perp_Y\xi,\eta\rangle\,\di X - \langle \na^\perp_X\xi,\eta\rangle\,\di Y,\label{eqn_div V NABLA}\\[1mm]
d\big(\ob_{Z,\eta}\big)(X,Y) &=& YB(X,Z,\eta) - XB(Y,Z,\eta) + B([X,Y],Z,\eta),\qquad\quad \label{eqn_curl Omega B}\\
d \big( \onab_{\xi,\eta}\big)(X,Y)& =&  -Y\langle \nap_X\xi,\eta\rangle
+X\langle \nap_Y\xi,\eta\rangle - \langle \nap_{[X,Y]}\xi,\eta\rangle,\label{eqn_curl Omega NABLA}
\end{eqnarray}
where the terms $B(X,Z,\eta)\,\di Y$, $B(Y,Z,\eta)\,\di X$, $\langle \na^\perp_Y\xi,\eta\rangle\,\di X$,
$\langle \na^\perp_X\xi,\eta\rangle\,\di Y$, $B([X,Y],Z,\eta)$, and $\langle \nap_{[X,Y]}\xi,\eta\rangle$
are linear in $(B,\na^\perp)$, while the other terms on the right-hand sides of the above four equations
involve first-order derivatives of $(B,\na^\perp)$.
Moreover, for further development, it is crucial to observe that
\begin{equation}\label{eq: div V equals d Omega}
\begin{cases}
{\rm div}\big(\vb_{Z,\eta}(X,Y)\big) = d\big(\ob_{Z,\eta}\big)(X,Y) + [\text{linear terms}],\\[1mm]
{\rm div}\big(\vnab_{\xi,\eta}(X,Y)\big)  =  d \big( \onab_{\xi,\eta}\big)(X,Y)  + [\text{linear terms}].
\end{cases}
\end{equation}

Next, using the tensor fields $V$ and $\Omega$ introduced above,
we can reformulate the GCR system as the following equations
with emphasis on the pairings of $V$'s and $\Omega$'s:
\begin{eqnarray}
&&\sum_\eta \langle\vb_{Z,\eta}(X,Y), \ob_{W,\eta} \rangle= R(X,Y,Z,W),
\label{eqn_Gauss Eqn in second computation}\\
&&d(\ob_{Z,\eta})(X,Y) + \sum_\beta \langle \vnab_{\eta,\beta}(X,Y),\ob_{Z,\beta}\rangle
\nonumber\\
&&\qquad\quad
+ B(Y, \na_XZ, \eta) - B(X, \na_YZ,\eta)=0, \qquad
\label{eqn_Codazzi Eqn in second computation}\\[2mm]
&& d(\onab_{\xi,\eta})(X,Y) + \sum_{\beta} \langle \vnab_{\eta,\beta}(X,Y), \onab_{\xi,\beta} \rangle
= \sum_Z\langle\vb_{Z,\xi}(X,Y),\ob_{Z,\eta}\rangle, \qquad\,\,\,
\label{eqn_Ricci Eqn in second computation}
\end{eqnarray}
where all the summations are at most countable and locally finite.

Therefore, we have transformed the GCR equations in Proposition \ref{propn: GCR equations}
into Eqs. \eqref{eqn_Gauss Eqn in second computation}--\eqref{eqn_Ricci Eqn in second computation},
expressed in terms of the tensor fields $V$ and $\Omega$.
Furthermore, the divergence of $V$ roughly equals to the generalized curl of the corresponding $\Omega$,
which involves the derivatives of solutions $(B,\na^\perp)$ up to the first order.

\smallskip
{\bf Step 3.} Now we are at the stage of applying the geometrically intrinsic div-curl lemma (Theorem \ref{thm: geometric div-curl})
to conclude the weak rigidity of isometric immersions.
Let $\{B^\e, \na^{\perp,\e}\}$ be the second fundamental forms and normal connections
associated to the sequence of isometric immersions $\{f^\e\}$.
As $\{f^\e\}$ is uniformly bounded in $W^{2,p}_\loc$,
the tensor fields $\{V^{(B^\e)}, \Omega^{(B^\epsilon)}, V^{(\na^{\perp,\e})}, \Omega^{(\na^{\perp,\e})}\}$
are uniformly bounded in $L^p_\loc$,
so that they are pre-compact in the weak topology.
In view of Eqs. \eqref{eqn_Codazzi Eqn in second computation}--\eqref{eqn_Ricci Eqn in second computation}
and \eqref{eq: div V equals d Omega},
the Cauchy-Schwarz inequality immediately yields that $\{\di V^{(B^\e)}, d\Omega^{(B^\epsilon)}, \di V^{(\na^{\perp,\e})}, d\Omega^{(\na^{\perp,\e})}\}$
are uniformly bounded in $L^{p/2}_\loc$,
which compactly embeds into $W^{-1, p'}_\loc$ for some $1<p'<2$.
On the other hand, they are uniformly bounded in $W^{-1,p}_\loc$ for $p>n\geq 2$.
Thus, by interpolation, we find that
\begin{equation*}
\{\di V^{(B^\e)}, d\Omega^{(B^\epsilon)}, \di V^{(\na^{\perp,\e})}, d\Omega^{(\na^{\perp,\e})}\} \quad\,
\text{ are pre-compact in } H^{-1}_\loc,
\end{equation*}
which is precisely the desired first-order differential constraints
for the geometrically intrinsic div-curl lemma, Theorem \ref{thm: geometric div-curl}.

Therefore, applying Theorem \ref{thm: geometric div-curl},
we obtain the following subsequential convergence results in $\mathcal{D}'(M)$:
\begin{eqnarray*}
&&\langle \vbe_{W,\eta}(X,Y), \obe_{Z,\eta}\rangle \longrightarrow  \langle \vb_{W,\eta}(X,Y), \ob_{Z,\eta}\rangle,\\[1mm]
&&\langle\vnabe_{\eta,\beta}(X,Y),\onabe_{\xi,\beta}\rangle \longrightarrow\langle\vnab_{\eta,\beta}(X,Y),\onab_{\xi,\beta}\rangle,\\[1mm]
&&\langle \vbe_{Z,\xi}(X,Y),\obe_{Z,\eta} \rangle \longrightarrow \langle \vb_{Z,\xi}(X,Y),\ob_{Z,\eta} \rangle,\\[1mm]
&&\langle\vnabe_{\eta,\beta}(X,Y),\obe_{Z,\beta}\rangle \longrightarrow\langle\vnab_{\eta,\beta}(X,Y),\ob_{Z,\beta}\rangle,
\end{eqnarray*}
so that we can pass to the limits in Eqs. \eqref{eqn_Gauss Eqn in second computation}--\eqref{eqn_Ricci Eqn in second computation}.
As shown in Step $2$,
these equations are equivalent to the GCR equations, which leads to the weak continuity of the GCR equations.
Finally, by Step $1$, we know that the existence of solutions of the GCR equations in $\mathcal{D}'(M)$ are equivalent to the existence
of isometric immersions in $W^{2,p}_\loc(M;\rnk)$.
Thus, the assertion is proved on the local trivialized chart $U \subset M$ as in Step $1$.
When $M$ is simply-connected, we can pass from the local to global by a standard monodromy argument.
\end{proof}

We now make three comments on our main theorem, Theorem \ref{thm: weak rigidity}.

First of all, in Step $1$, we have given a geometrically intrinsic proof
of the {\it realization theorem}.
This can be summarized as follows:

\begin{corollary}[Theorem 5.2 in \cite{chenli}]\label{cor_equivalence of 3 formulations}
Let $(M,g)$ be an $n$-dimensional, simply-connected Riemannian manifold
with metric $g\in\woneploc$ for $p>n$,
and let $(E,M,\real^k)$ be a vector bundle over $M$.
Assume that $E$ has a $\woneploc$ metric $g^E$ and an $\lploc$ connection $\na^E$
such that  $\na^E$ is compatible with  $g^E$.
Moreover, suppose that there is
an $\lploc$ tensor field $S:\Gamma(E) \times \vf \rightarrow \vf$ satisfying
\begin{equation*}
\langle X, S_\eta (Y)\rangle - \langle S_\eta (X), Y\rangle =0,
\end{equation*}
and a corresponding $\lploc$ tensor field $B: \vf \times \vf \rightarrow \Gamma (E)$ defined by
\begin{equation*}
\langle B(X,Y),\eta \rangle = -\langle S_\eta (X), Y \rangle.
\end{equation*}
Then the following are equivalent{\rm :}
\begin{enumerate}
\item[\rm (i)]
The GCR equations as in Proposition {\rm \ref{propn: GCR equations}} with $R^\perp$ replaced by $R^E$,
the Riemann curvature operator on the bundle{\rm ;}
\item[\rm (ii)]
The Cartan formalism{\rm ;}
\item[\rm (iii)]
The existence of a global isometric immersion $f\in W^{2,p}_{\rm loc}(M; \real^{n+k})$ such that the induced normal
bundle $T(fM)^\perp$, normal connection $\na^\perp$, and second fundamental form can be identified
with $E, \na^E$, and $B$, respectively.
\end{enumerate}
In {\rm (i)}--{\rm (ii)}, the equalities are taken in the distributional sense and, in {\rm (iii)},
the isometric immersion $f\in W^{2,p}_{\rm loc}$ is unique {\it a.e.},
modulo the Euclidean group of rigid motions $\real^{n+k} \rtimes O(n+k)$.
\end{corollary}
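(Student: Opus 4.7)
The plan is to prove the cycle $(\text{iii}) \Rightarrow (\text{i}) \Leftrightarrow (\text{ii}) \Rightarrow (\text{iii})$, together with the asserted uniqueness modulo $\rnk \rtimes O(n+k)$. The implication $(\text{iii}) \Rightarrow (\text{i})$ is an immediate application of Proposition \ref{propn: GCR equations}: every $\wtwoploc$ isometric immersion produces the distributional Gauss, Codazzi, and Ricci equations by direct computation, and the assumption $p > n$ combined with the Sobolev embedding $\woneploc \emb C^0_\loc$ ensures that all bilinear pairings of $(B, \nap)$ appearing in the equations are well-defined as distributions.

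For $(\text{i}) \Leftrightarrow (\text{ii})$, I would work on any local trivialization $U \subset M$ on which both $TM$ and $E$ are trivial, and fix an orthonormal frame $\{\p_i\}$ with dual co-frame $\{\omega^i\}$, together with an orthonormal basis $\{\eta_\alpha\}$ of $E|_U$. The connection one-forms defined by Eqs. \eqref{eqn_def of connection form 1}--\eqref{eqn_def of connection form 3} then lie in $\lploc$, and a direct application of the Cartan formula $d\alpha(X,Y) = X\alpha(Y) - Y\alpha(X) - \alpha([X,Y])$ shows that the first structural equation \eqref{eqn_first structure eqn} is the torsion-free condition for $\na$. The second structural equation \eqref{eqn_second structure eqn}, split according to the schematic $(\spadesuit)$ into its tangent-tangent, tangent-normal, and normal-normal components, reproduces exactly the Gauss, Codazzi, and Ricci equations of Proposition \ref{propn: GCR equations} (with $R^\perp$ replaced by $R^E$). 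All calculations extend to the $\woneploc$ setting by density since $p>n$ renders all arising products of coefficients locally integrable.

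The substantive content is $(\text{ii}) \Rightarrow (\text{iii})$, which follows the Tenenblat-type strategy recalled in Step $1$ of the proof of Theorem \ref{thm: weak rigidity}. I would assemble $W = \{\omega^a_b\} \in \Omega^1(U; \mathfrak{so}(n+k))$ and $w = (\omega^1, \ldots, \omega^n, 0, \ldots, 0)^\top \in \Omega^1(U; \rnk)$, and observe that $dW + W \wedge W = 0$ is precisely the compatibility condition for the Pfaff system $dA \cdot A^\top = W$ with unknown $A: U \to O(n+k)$. Invoking Mardare's existence theorem in $\woneploc$ regularity produces such an $A$; substituting $W = dA \cdot A^\top$ into the first structural equation shows that the $\rnk$-valued $1$-form $w \cdot A$ is closed, which is the compatibility condition for the Poincar\'e system $df = w \cdot A$. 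A second application of Mardare's theorem then yields $f \in \wtwoploc(U; \rnk)$, and since $p > n$ the embedding $\woneploc \emb C^0_\loc$ gives $df \in C^0_\loc$; the orthogonality of $A$ combined with $df = w \cdot A$ forces $df \otimes df = g$, so $f$ is an isometric immersion. Unwinding the definitions then identifies the induced normal bundle, normal connection, and second fundamental form with $E$, $\na^E$, and $B$.

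The principal obstacle is passing from this local construction to a globally defined isometric immersion on $M$ and establishing the stated uniqueness. The plan is to exploit the simple-connectedness of $M$: on the overlap $U_1 \cap U_2$ of two trivializing charts, the two local solutions $f_1, f_2$ realize the same abstract data $(g, \na^E, g^E, B)$, and the uniqueness clauses of Mardare's theorems for the Pfaff and Poincar\'e systems force $f_1$ and $f_2$ to differ on $U_1 \cap U_2$ by a fixed element of the Euclidean group $\rnk \rtimes O(n+k)$. The resulting transition $1$-cocycle with values in $\rnk \rtimes O(n+k)$ is then trivialized by a standard monodromy argument on simply-connected $M$, producing a globally defined $f \in \wtwoploc(M; \rnk)$; and the same uniqueness clauses yield the a.e.\ uniqueness of $f$ modulo $\rnk \rtimes O(n+k)$.
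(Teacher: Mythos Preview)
Your proposal is correct and follows essentially the same approach as the paper: the implication $(\text{iii})\Rightarrow(\text{i})$ via Proposition~\ref{propn: GCR equations}, the equivalence $(\text{i})\Leftrightarrow(\text{ii})$ by splitting the second structural equation into its tangent--tangent, tangent--normal, and normal--normal components, and the implication $(\text{ii})\Rightarrow(\text{iii})$ by solving the Pfaff and Poincar\'e systems \eqref{eq: pfaff and poincare} via Mardare's low-regularity Frobenius-type theorems, followed by the monodromy argument for globalization and uniqueness. This is exactly the strategy outlined in Step~1 of the proof of Theorem~\ref{thm: weak rigidity} and the surrounding discussion in \S3--\S4.
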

In view of the above corollary, for the purpose of weak rigidity,
it is more natural to investigate the Cartan formalism.
In particular, the GCR equations are recast into the compact identity $dW = W\wedge W$,
which is the second structural equation (as in well-known in geometry, the first structural equation expresses
the torsion-free property of the Levi-Civita connection).
However, notice that the connection $1$-form $W$ consists of only $(B,\na^\perp)$
so that, for the sequence of isometric immersions $\{f^\e\}$ uniformly bounded in $W^{2,p}_\loc(M;\rnk)$,
the corresponding $\{W^\e\}$ is uniformly bounded in $L^p_\loc(M; \mathfrak{so}(n+k))$.
Let $\bar{W}$ be a weak limit of $\{W^\e\}$.
Then, via similar arguments as in Steps 2--3 in the proof of Theorem \ref{thm: weak rigidity},
we can pass the limits in
\begin{equation}
dW^\e = W^\e \wedge W^\e\quad \Longrightarrow\quad d\bar{W} = \bar{W} \wedge \bar{W}.
\end{equation}
Therefore,
applying Corollary \ref{cor_equivalence of 3 formulations},
we obtain a simplified proof of Theorem \ref{thm: weak rigidity}.

Second, for the most {\em physically relevant} case of the isometric immersing/embedding
of a $2$-dimensional manifold ({\it i.e.}, a surface)
into $\R^3$, we can also establish the weak rigidity of the GCR equations in the critical case $p=n=2$
(where the Ricci equation is trivial).
This is because, on the right-hand side of the Gauss equation \eqref{eqn_gauss},
we have the Gauss curvature $R(X,Y,Z,W)$, which is a {\em fixed $L^1$ function}
in the setting of isometric immersions,
thanks to the Cauchy-Schwarz inequality.
Therefore, it is equi-integrable. Then we can apply the critical case of the div-curl
lemma (Theorem \ref{thm_ generalised critical case, div curl lemma}).

\begin{corollary}[Theorem 6.3 in \cite{chenli}]\label{cor:endpoint}
Let $M$ be a $2$-dimensional, simply-connected surface,
and let $g$ be a metric in $H^{1}_{\rm loc}$. If $\{f^\epsilon\}$ is a family of $H^{2}_{\rm loc}$ isometric
immersions of $M$ into $\real^3$ such that the corresponding second fundamental
forms $\{B^\epsilon\}$ are uniformly $L^2$-bounded.
Then, after passing to a subsequence, $\{f^\epsilon\}$ converges
to $\bar{f}$ weakly in $H^2_{\rm loc}$ which is still an isometric immersion $\bar{f}:(M,g) \rightarrow \real^3$.
Moreover, the corresponding second fundamental form $\bar{B}$ is a limit point of $\{\bep\}$ in the $L^2_{\rm loc}$ topology.
\end{corollary}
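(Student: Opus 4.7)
The plan is to follow the same three-step pattern as the proof of Theorem~\ref{thm: weak rigidity}, but to substitute the endpoint div-curl lemma, Theorem~\ref{thm_ generalised critical case, div curl lemma}, in place of Theorem~\ref{thm: geometric div-curl}. Since $M$ is $2$-dimensional and the target is $\R^3$, the normal bundle $T(f^\e M)^\perp$ has rank one. Hence each normal connection $\nep$ is automatically trivial, and the Ricci equation \eqref{eqn_ricci} collapses to the tautology $0=0$. Thus only the Gauss and Codazzi equations are nontrivial, and the tensor fields $\vnabe$ and $\onabe$ can be disregarded throughout.

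Next, I would verify hypotheses (i)--(iii) of Theorem~\ref{thm_ generalised critical case, div curl lemma} for the tensor fields $\vbe$ and $\obe$. Because $\nep \equiv 0$, the Codazzi equation \eqref{eqn_Codazzi Eqn in second computation} reduces to
\begin{equation*}
d(\obe_{Z,\eta})(X,Y) = B^\e(X,\na_Y Z,\eta) - B^\e(Y,\na_X Z,\eta),
\end{equation*}
which is linear in $B^\e$. Combined with the ``divergence equals generalized curl modulo linear terms'' identity \eqref{eq: div V equals d Omega}, this yields that both $\{d\obe_{Z,\eta}\}$ and $\{\di\,\vbe_{Z,\eta}\}$ are uniformly bounded in $L^2_\loc$. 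On the $2$-manifold $M$, the Rellich--Kondrachov embedding $H^1_0 \emb L^q$ (valid for every $q<\infty$) dualizes to give $L^2_\loc \Subset W^{-1,1}_\loc$, verifying condition (ii). Hypothesis (i) --- weak $L^2_\loc$ convergence of $\vbe$ and $\obe$ --- follows at once from the uniform $L^2_\loc$ bound on $\{B^\e\}$.

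To confirm the equi-integrability hypothesis (iii), I would observe that the Gauss equation \eqref{eqn_Gauss Eqn in second computation} identifies $\sum_\eta \langle \vbe_{Z,\eta}(X,Y), \obe_{W,\eta}\rangle$ with the fixed Riemann curvature $R(X,Y,Z,W)$ of the prescribed metric $g$, independent of $\e$. This is a constant sequence in $L^1_\loc$ (in particular, lying in $L^1_\loc$ a posteriori via Cauchy--Schwarz on the $L^2_\loc$ bound of $B^\e$), hence trivially equi-integrable by absolute continuity of the integral. Theorem~\ref{thm_ generalised critical case, div curl lemma} then supplies the weak continuity of every $\vbe\cdot\obe$ pairing in $\mathcal{D}'(M)$. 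Passing to the limit in the Gauss equation, and noting that the Codazzi equation is already linear in $B^\e$ (so its weak limit is automatic), the weak $L^2_\loc$ limit $\bar B$ of $\{B^\e\}$ together with $\overline{\nap}\equiv 0$ solves the full GCR system on $(M,g)$. The realization step (Corollary~\ref{cor_equivalence of 3 formulations}) then produces an $H^2_\loc$ isometric immersion $\bar f:(M,g)\to\R^3$ with second fundamental form $\bar B$, and extracting a weakly convergent subsequence from the $H^2_\loc$-bounded $\{f^\e\}$ yields a limit coinciding with $\bar f$ modulo a Euclidean rigid motion.

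I expect the main obstacle to be the regularity bookkeeping at the critical exponent. The realization/Pfaff--Poincar\'e portion of Step~$1$ in the proof of Theorem~\ref{thm: weak rigidity} was stated for $p>n$, and making it robust in the endpoint regime $g\in H^1_\loc$, $f^\e\in H^2_\loc$ relies crucially on the two-dimensional Sobolev embeddings ($H^1\emb L^q$ for all $q<\infty$ and $H^2\emb C^{0,\alpha}$ for $\alpha<1$), which preserve enough pointwise control of the frame fields to run Mardare's solvability results. A secondary delicate point is ensuring that $R$ genuinely lives in $L^1_\loc$ rather than merely in some negative Sobolev space; this is salvaged precisely by the Gauss identity, which realizes $R$ as an $L^1_\loc$ quantity coming from an $L^2$--$L^2$ pairing, thereby closing the loop.
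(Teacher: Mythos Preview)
Your approach is essentially the paper's own: the paragraph preceding the corollary sketches exactly this argument---the Ricci equation is vacuous in codimension one, the Gauss equation identifies the quadratic pairing with the \emph{fixed} curvature $R\in L^1_\loc$ (hence equi-integrable), and one invokes the endpoint div-curl lemma, Theorem~\ref{thm_ generalised critical case, div curl lemma}, in place of Theorem~\ref{thm: geometric div-curl}.

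One regularity slip is worth flagging, since you yourself identify bookkeeping at the critical exponent as the main hazard. With $g\in H^1_\loc$ only, the Christoffel symbols and the quantities $\na_Y Z$, $\di X$, $\di Y$ lie merely in $L^2_\loc$, not $L^\infty_\loc$. Consequently the right-hand side of your reduced Codazzi identity, $B^\e(X,\na_Y Z,\eta)-B^\e(Y,\na_X Z,\eta)$, and the ``linear terms'' in \eqref{eq: div V equals d Omega} are products of two $L^2_\loc$ factors, hence only $L^1_\loc$-bounded, not $L^2_\loc$-bounded as you assert. This does not actually break the argument: on a bounded $2$-dimensional chart one still has the compact embedding $L^1\Subset W^{-1,1}$ (dualize the compact inclusion $W^{1,\infty}_0\Subset C_0$ given by Arzel\`a--Ascoli), so hypothesis~(ii) of Theorem~\ref{thm_ generalised critical case, div curl lemma} is verified regardless. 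Your stated dualization ``$H^1_0\emb L^q$ gives $L^2\Subset W^{-1,1}$'' is not quite the right pairing either; the correct route is $W^{1,\infty}_0\Subset L^2$ (via $W^{1,\infty}_0\emb H^1_0\Subset L^2$) and then dualize. You also correctly anticipate that the realization step (Corollary~\ref{cor_equivalence of 3 formulations}) needs separate justification at $p=n=2$, which the survey does not spell out.
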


Finally, it is easy to derive a slightly more general version of the weak rigidity theorem,
Theorem \ref{thm: weak rigidity}, by allowing the metrics to be unfixed and strongly convergent
in $W^{1,p}_\loc$ for $p>n$.
Such scenarios naturally arise in the regularization of a singular metric into
smooth ones; {\it cf}. \S 7.2 in \cite{chenli}.

We remark in passing that the analogies of Theorem \ref{thm: weak rigidity}
and Corollaries \ref{cor_equivalence of 3 formulations}--\ref{cor:endpoint}
for isometric immersions into semi-Euclidean spaces of semi-Riemannian submanifolds
({\it i.e.}, the metrics are non-degenerate, but may no longer be positive-definite; see O'Neill \cite{oneill}) are also valid.
For the possibly degenerate hypersurfaces, using the machinery of rigging fields ({\it cf.} \cite{schouten, lefloch2, mars}),
a counterpart of the Cartan formalism can be established,
which leads to the weak rigidity, provided that the rigging fields
are uniformly $L^p_\loc$ bounded. For a rigorous formulation and the proof of these
results, see our forthcoming paper \cite{chenli-2}.

As discussed above, we have established the weak rigidity of isometric
immersions (Theorem \ref{thm: weak rigidity}) in \cite{chenli}.
It would be interesting
to explore its relation with the rigidity/non-rigidity results in stronger
topologies (see the discussion at the beginning of \S 4),
to extend it to the larger framework of the h-principle laid down by Gromov (\cite{gromov}),
and to examine what the possible implications are in fluid dynamics, in view of the connections
between isometric immersions and  Euler equations (see \cite{ACSW,CSW10-CMP} and \cite{delellis}).

\bigskip
\noindent
{\bf Acknowledgement}.
Gui-Qiang Chen's research was supported in part by
the UK EPSRC Science and Innovation Award
to the Oxford Centre for Nonlinear PDE (EP/E035027/1),
the UK EPSRC Award to the EPSRC Centre for Doctoral Training
in PDEs (EP/L015811/1), and the Royal Society--Wolfson Research Merit Award (UK).
Siran Li's research was supported in part by the UK EPSRC Science and Innovation Award
to the Oxford Centre for Nonlinear PDE (EP/E035027/1).

\bigskip


\begin{thebibliography}{99}

\bibitem{ACSW}
A. Acharya, G.-Q. Chen, S. Li, M. Slemrod, and D. Wang,
Fluids, elasticity, geometry, and the existence of wrinkled solutions,
\textit{Arch. Rational Mech. Anal.}  \textbf{226} (2017), 1009--1060.



\bibitem{Ball}
J.~M. Ball, {Convexity conditions and existence theorems in nonlinear elasticity},
\textit{Arch. Rational Mech. Anal.} \textbf{63} (1977), 337--403.


\bibitem{breuning}
P. Breuning, {Immersions with bounded second fundamental form}, \textit{J. Geom. Anal.} \textbf{25} (2015), 1344--1386.


\bibitem{borisov}
Yu.~F. Borisov, {$C^{1,\alpha}$-isometric immersions of Riemannian spaces}, \textit{Dokl. Akad. Nauk. SSSR}, \textbf{163} (1965), 869--871.

\bibitem{torus}
V. Borrelli, S. Jabrane, F. Lazarus, and B. Thibert, {Flat tori in three-dimensional space and convex integration}, \textit{Proc. Nat. Acad. Sci.}, \textbf{109} (2012), 7218--7223.


\bibitem{BGY}
R.~L. Bryant,  P.~A. Griffiths, and D. Yang,  {Characteristics
and existence of isometric embeddings},
{\it Duke Math. J.} \textbf{50} (1983),
893--994.


\bibitem{delellis}
T. Buckmaster, C. De Lellis, P. Isett, and L. Sz\'{e}kelyhidi Jr.,
{Anomalous dissipation for $\frac{1}{5}$-H\"{o}lder Euler flows},
\textit{Ann. of Math.} \textbf{182} (2015), 127--172.



\bibitem{Cartan}
E. Cartan, {Sur la possibilit\'e de plonger un espace Riemannian
donn\'e dans un espace Euclidien},
\textit{Ann. Soc. Pol. Math.} \textbf{6} (1927),
1--7.

\bibitem{CCSWY}
G.-Q. Chen, J. Clelland, M. Slemrod, D. Wang, and D. Yang,
Isometric embedding via strongly symmetric positive systems,
\textit{Asian J. Math.}  (2018) (to appear).
\textit{arXiv:1502.04356} (2015).

\bibitem{CHK}
G.-Q. Chen, H. Holden, and K. Karlsen,
{\it Hyperbolic Conservation Laws and Related Analysis with Applications},
Springer-Verlag: Berlin-Heidelberg, 2014.

\bibitem{chenli}
G.-Q. Chen and S. Li, {Global weak rigidity of the Gauss-Codazzi-Ricci equations and isometric immersions
of Riemannian manifolds with lower regularity}, \textit{J. Geom. Anal.} (2018) (to appear).
https://doi.org/10.1007/s12220-017-9893-1. \textit{ArXiv preprint}: 1607.06862 (2016).

\bibitem{chenli-2}
G.-Q. Chen and S. Li, {On the isometric immersions of semi-Riemannian submanifolds and general hypersurfaces: Lower regularity and weak rigidity},
\textit{Preprint}, October 2017.


\bibitem{CSW10-CMP}
G.-Q. Chen, M. Slemrod, and D. Wang,
{Isometric immersions and compensated compactness},
\textit{Commun. Math. Phys.} \textbf{294} (2010), 411--437.


\bibitem{CSW10}
G.-Q. Chen, M. Slemrod, and D. Wang,
{Weak continuity of the Gauss-Codazzi-Ricci system for isometric embedding},
\textit{Proc. Amer. Math. Soc.} \textbf{138} (2010), 1843--1852.



\bibitem{Cia08}
P.~G. Ciarlet, L. Gratie, and C. Mardare,
{A new approach to the fundamental theorem of surface theory},
\textit{Arch. Rational Mech. Anal.} \textbf{188} (2008), 457--473.



\bibitem{CLMS}
R. Coifman, P.-L. Lions, Y. Meyer, and S. Semmes,
{Compensated compactness and Hardy spaces},
\textit{J. Math. Pures Appl.} \textbf{72} (1993), 247--286.

\bibitem{conti-delellis}
S. Conti, C. De Lellis, and L. Sz\'{e}kelyhidi Jr.,
{h-principle and rigidity for $C^{1,\alpha}$ isometric embeddings},
In: {\it Nonlinear Partial Differential Equations},
pp. 83--116, H. Holden and K. Karlsen (Eds.),
Springer-Verlag: Berlin-Heidelberg, 2012.


\bibitem{CDM}
S. Conti, G. Dolzmann, and S. M\"{u}ller,
{The div-curl lemma for sequences whose divergence and curl are
compact in $W^{-1,1}$},
\textit{C.~R. Math. Acad. Sci. Paris}, \textbf{349} (2011), 175--178.



\bibitem{DoC92}
M.~P. do Carmo, \textit{Riemannian Geometry},
Birkh\"{a}user: Boston, 1992.

\bibitem{Dacorogna}
B. Dacorogna, {\em Weak Continuity and Weak Lower Semicontinuity of
Nonlinear Functionals}, Springer-Verlag: Berlin, 1982.

\bibitem{Dafermos-book}
C. M. Dafermos, {\em Hyperbolic Conservation Laws in Continuum
Physics},  Third Ed., Springer-Verlag: Berlin, 2010.


\bibitem{Eisenhart}
L. P. Eisenhart, {\em Riemannian Geometry}, Eighth Printing,
Princeton University Press: Princeton, NJ, 1997.

\bibitem{Eva90}
L.~C. Evans,
\textit{Weak Convergence Methods for Nonlinear Partial Differential Equations},
CBMS-RCSM, \textbf{74}, AMS: Providence, 1990.

\bibitem{Evans-Muller}
L.~C. Evans and S. M\"{u}ller,
Hardy spaces and the two-dimensional Euler equations with nonnegative vorticity,
\textit{J. Amer. Math. Soc.} \textbf{7} (1994), 199--219.

\bibitem{f}
M. Fabian, P. Habala, P. H\'{a}jek, V.~M. Santaluc\'{i}a, J. Pelant, and V. Zizler,
\textit{Functional Analysis and Infinite-Dimensional Geometry},
Springer: New York, 2001.


\bibitem{Goenner}
H. F. Goenner, {On the interdependency of the
Gauss-Codazzi-Ricci equations of local isometric embedding},
\textit{General
Relativity and Gravitation}, \textbf{8} (1977), 139--145.

\bibitem{Greene}
R. Greene, {\em  Isometric Embeddings of Riemannian and
Pseudo-Riemannian Manifolds},
Mem. Amer. Math. Soc. \textbf{97}, AMS:
Providence, RI, 1970.


\bibitem{gromov}
M. Gromov, \textit{Partial Differential Relations}, Vol. 9. Springer Science and Business Media, 2013.

\bibitem{Gunther89}
M. G\"unther,
{Zum Einbettungssatz von J. Nash},
\textit{Math. Nach.} \textbf{144} (1989), 165--187.

\bibitem{HanHon06}
Q. Han and J.-X. Hong,
\textit{Isometric Embedding of Riemannian Manifolds in Euclidean Spaces},
AMS: Providence, 2006.



\bibitem{I}
T. Iwaniec, A. Lutoborski, {Integral estimates for null Lagrangians},
\textit{Arch. Rational Mech. Anal.}, \textbf{125} (1993), 25--79.


\bibitem{Janet}
M. Janet, {Sur la possibilit\'e de plonger un espace Riemannian
donn\'e dans un espace Euclidien}, \textit{Ann. Soc. Pol. Math.} \textbf{5} (1926),
38--43.

\bibitem{KY13}
H. Kozono and T. Yanagisawa,
{Global compensated compactness theorem for general
differential operators of first order},
\textit{Arch. Rational Mech. Anal.}
\textbf{207} (2013), 879--905.


\bibitem{Langer}
J. Langer,
{A compactness theorem for surfaces with $L_p$-bounded second fundamental form},
\textit{Math. Ann.} \textbf{270} (1985), 223--234.


\bibitem{lefloch2}
P.G. LeFloch, C. Mardare, and S. Mardare, {Isometric immersions into the Minkowski
spacetime for Lorentzian manifolds with limited regularity}, \textit{Disc. Cont. Dyn. Syst.} \textbf{23} (2009), 341--365.


\bibitem{Mar05}
S. Mardare,
{On Pfaff systems with $L^p$ coefficients and their applications
in differential geometry},
\textit{J. Math. Pures Appl.} \textbf{84} (2005), 1659--1692.


\bibitem{Mar07}
S. Mardare,
{On systems of first order linear partial differential
equations with $L^p$ coefficients},
\textit{Adv. Diff. Eq.} \textbf{12} (2007),
301--360.


\bibitem{mars}
M. Mars, J.~M.~M. Senovilla, {Geometry of general hypersurfaces in spacetime: junction conditions}, \textit{Class. Quantum Grav.} \textbf{10} (1993), 1865--1897.


\bibitem{Mur78}
F. Murat,
{Compacit\'{e} par compensation},
\textit{Ann. Scuola Norm. Sup. Pisa Cl. Sci.}
\textbf{5} (1978), {489--507}.

\bibitem{Murat2}
F. Murat, {Compacit\'{e} par compensation. II}, In: {\it Proceedings
of the International Meeting on Recent Methods in Nonlinear Analysis
{\rm (}Rome, 1978{\rm )}},  pp. 245--256, Pitagora, Bologna, 1979.



\bibitem{NM}
G. Nakamura and Y. Maeda,  {Local isometric embedding problem of
Riemannian $3$-manifold into $\R^6$}, \textit{Proc. Japan Acad. Ser. A:
Math. Sci.} \textbf{62} (1986), 257--259.

\bibitem{Nas54}
J. Nash,
$C^1$ isometric imbeddings,
\textit{Ann. Math.}  \textbf{60} (1954), 383--396.

\bibitem{Nas56}
J. Nash,
{The imbedding problem for Riemannian manifolds},
\textit{Ann. Math.} \textbf{63} (1956), 20--63.


\bibitem{oneill}
B. O'Neill, {\em Semi-Riemannian Geometry with Applications to Relativity}, Vol. 103.
Academic Press, 1983.


\bibitem{RRT87}
J. Robbin, R. Rogers, and B. Temple,
{On weak continuity and the Hodge decomposition},
\textit{Trans. Amer. Math. Soc.} \textbf{303} (1987), 609--618.

\bibitem{Spi79}
M. Spivak,  {\em A Comprehensive Introduction to Differential
Geometry},  Publish or Perish, Inc.: Boston, Mass., Vol. I-II, 1970;
Vol. III-V, 1975.


\bibitem{sternberg}
S. Sternberg, {\em  Curvature in Mathematics and Physics}, Courier Corporation, 2012.


\bibitem{schouten}
J.~A. Schouten, {\em Ricci-Calculus: An Introduction to Tensor Analysis and Its Geometrical Applications},
Vol. 10. Springer Science and Business Media, 2013.

\bibitem{Szo08}
M. Szopos,
{An existence and uniqueness result for isometric immersions with little regularity},
\textit{Rev. Roumaine. Math. Pures Appl.} \textbf{53} (2008), 555--565.

\bibitem{Tar79}
L. Tartar,
{Compensated compactness and applications to partial differential equations},
In: \textit{Nonlinear Analysis and Mechanics: Heriot-Watt Symposium}, Vol. \textbf{4},
pp. 136--212, \textit{Res. Notes in Math.} \textbf{39},
Pitman: Boston, Mass.-London, 1979.

\bibitem{Tartar2}
L. Tartar, {The compensated compactness method applied to
systems of conservation laws}.  In: {\it Systems of Nonlinear Partial
Differential Equations (Oxford, 1982)},  pp. 263--285, NATO Adv. Sci.
Inst. Ser. C Math. Phys. Sci., 111, Reidel: Dordrecht, 1983.


\bibitem{Ten71}
K. Tenenblat,
{On isometric immersions of Riemannian manifolds},
\textit{Bull. Brazilian Math. Soc.} \textbf{2} (1971), 23--36.


\bibitem{War71}
F.~W. Warner,
\textit{Foundations of Differentiable Manifolds and Lie Groups},
Scott, Foresman and Co.: Glenview, Ill.-London, 1971.


\bibitem{Whitney57}
H. Whitney,
\textit{Geometric Integration Theory},
Princeton University Press: Princeton, 1957.

\bibitem{yau}
S.-T. Yau: Review of geometry and analysis. In:
\textit{Mathematics: Frontiers and Perspectives},
pp. 353--401.
International
Mathematics Union, Eds. V. Arnold, M. Atiyah, P. Lax,
B. Mazur, AMS: Providence, RI, 2000.
\end{thebibliography}
\end{document}